\newcommand{\citeN}[1]{\textcite{#1} }
\newtheorem{Problem}{Problem}
\newtheorem{theorem}{Theorem}
\newtheorem{proposition}{Proposition}
\newtheorem{Corollary}{Corollary}
 \title{Avoiding two consecutive blocks of same size and same sum over $\mathbb{Z}^2$}
 \author{Micha\"el Rao and Matthieu Rosenfeld}
\def\alphabet{\Sigma}
\def\restriction#1#2{{#1}_{[#2]}}
\begin{document}
\maketitle
\begin{abstract}
A long standing question asks whether $\mathbb{Z}$ is uniformly $2$-repetitive %\citeN{Justin1972,introadditivepowers}.
 [Justin 1972, Pirillo and Varricchio, 1994], that is, 
 whether there is an infinite sequence over a finite subset of $\mathbb{Z}$ avoiding two consecutive blocks of same size and same sum or not.
Cassaigne \emph{et al.} [2014]
showed that $\mathbb{Z}$ is not uniformly $3$-repetitive.
We show that $\mathbb{Z}^{2}$ is not uniformly $2$-repetitive.
Moreover, this problem is related to a question from M\"akel\"a in combinatorics on words and we answer to a weak version of it.
\end{abstract}
\section{Introduction}
Let $k\ge 2$ be an integer and $(G,+)$ a group. 
An \emph{additive $k$-th power} is a non empty word $w_1\ldots w_k$ over $\alphabet \subseteq G$ such that all for every $i\in\{2,\ldots, k\}$,
$\vert w_i\vert = \vert w_1 \vert$ and $\sum w_i = \sum w_1$ (where $\sum v = \sum_{i=1}^{\vert v\vert} v[i]$).
Using the terminology of 
\citeN{introadditivepowers} we say that a group $(G,+)$ is \emph{$k$-uniformly repetitive} if every infinite word over
a finite subset of $G$ contains an additive $k$-th power as a factor.
It is a long standing question whether $\mathbb{Z}$ is uniformly $2$-repetitive or not~\cite{Justin1972,introadditivepowers}.
\citeN{avoid3consblock} showed that there is an infinite word over the finite alphabet $\{0,1,3,4\} \subseteq \mathbb{Z}$ without
additive 3rd powers, that is  $\mathbb{Z}$ is not uniformly $3$-repetitive.
In Section \ref{sec:results} we show that:

\newtheorem*{Th:addsquareavoidable}{Theorem \ref{addsquareavoidable}}
\begin{Th:addsquareavoidable}
 $\mathbb{Z}^2$ is not uniformly $2$-repetitive.
\end{Th:addsquareavoidable}

When $(G,+)$ is the abelian-free group generated by the elements of $\Sigma$ we talk about abelian repetitions.
The avoidability of abelian repetitions has been studied since a question from Erd\H{o}s~\cite{erdos1,erdos2}. %Erd\H{o}s in 1957
An \emph{abelian square} is any non-empty word $uv$ where $u$ and $v$ are permutations of each other. 
Erd\H{o}s asked whether there is an infinite abelian-square-free word over an alphabet of size 4. %~\cite{erdos1,erdos2}. 
\citeN{keranen1} answered positively to Erd\H{o}s's question in 1992 by giving a 85-uniform morphism, found with the assistance of a computer, whose fixed point is abelian-square-free. %Ker{\"a}nen 

Erd\H{o}s also asked if it is possible to construct a word over 2 letters which contains only small squares. 
\citeN{Entringer1974159} 
gave a positive answer to this question. 
They also showed that every infinite word over 2 letters contains arbitrarily long abelian squares.
This naturally leads to the following question from M\"akel\"a (see \cite{keranen2}): 
\begin{Problem}
\label{makquestsquares}
Can you avoid abelian squares of the form $uv$ where $|u|\geq 2$ over three letters ?  - Computer experiments show that you can avoid these patterns at least in words of length 450.
\end{Problem}
We show that the answer is positive if we replace 2 by 6:

\newtheorem*{Th:makcorr}{Theorem \ref{makcorr}}
\begin{Th:makcorr}
There is an infinite word over 3 letters avoiding abelian square of period more than $5$. 
\end{Th:makcorr}

The proofs of Theorem~\ref{addsquareavoidable} and Theorem~\ref{makcorr} are close in the spirit 
(in fact both theorems implies independently that $\mathbb{Z}^3$ is not $2$-repetitive in the terminology of \citeN{introadditivepowers}).
Moreover the proofs are both based on explicit constructions using the following morphism:
$$h_6:\left\{
  \begin{array}{llll}
      a \rightarrow & ace \hspace{1cm}& 
      b \rightarrow & adf\\ 
      c \rightarrow & bdf& 
      d \rightarrow & bdc\\ 
      e \rightarrow & afe& 
      f \rightarrow & bce.\\ 
    \end{array}
  \right.
$$

First, we need to show the following:
\newtheorem*{Th:abelianthe}{Theorem \ref{abelianthe}}
\begin{Th:abelianthe}
$h_6^\omega(a)$ is abelian-square-free.
\end{Th:abelianthe}

We describe in Section \ref{sec:main} an algorithm to decide if a morphic word avoids abelian powers, and use it to show Theorem~\ref{abelianthe}. %and then show the following:
This algorithm generalizes the previously known ones \cite{carpi1,fixedpointpowerfree}, and can decide on a wider class of morphisms which includes $h_6$. %and other candidates 
In Section \ref{sec:app}, we explain how to extend the decidability to additive and long abelian powers.
Finally, in Section \ref{sec:results}, we give the results and the constructions.

\section {Preliminaries}
We use terminology and notations of~\citeN{Lothaire}.
An \emph{alphabet} $\alphabet$ is a finite set of \emph{letters}, and a \emph{word} is a (finite or infinite) sequence of letters.
The set of finite words is denoted by $\alphabet^*$ and the empty word by $\varepsilon$. 
One can also view $\alphabet^*$ equipped with the concatenation as the free monoid over $\alphabet$.

For any word $w$, we denote by $|w|$ the length of $w$ and for any letter $a\in\alphabet$, $|w|_a$ is the number of occurrences of $a$ in $w$.
The \emph{Parikh vector} of a word $w\in \alphabet^*$, denoted by $\Psi(w)$, is the vector indexed by $\alphabet$ such that for every $a\in \alphabet$, $\Psi(w)[a]= |w|_a$. 
Two words $u$ and $v$ are \emph{abelian equivalent}, denoted by $u\approx_a v$, if they are permutations of each other, or equivalently if $\Psi(u)=\Psi(v)$.
For any integer $k\ge 2$, an \emph{abelian $k$-th power} is a word $w$ that can be written $w= w_1w_2\ldots w_k$ with for all $i\in\{2,\ldots, k\}$,
$w_i \approx_a w_1$.
Its \emph{period} is $\vert w_i\vert$.
An \emph{abelian square} (resp., \emph{cube}) is an abelian 2nd power (resp., abelian 3rd power).
A word is \emph{abelian-$k$-th-power-free}, or \emph{avoids abelian $k$-th powers}, if none of its non-empty factor is an abelian $k$-th power.

Let $(G,+)$ be a group and $\Phi: (\alphabet^*, . ) \rightarrow (G,+)$ be a morphism.
Two words $u$ and $v$ are \emph{$\Phi$-equivalent}, denoted $u\approx_\Phi v$, if $\Phi(u)=\Phi(v)$.
For any $k\ge 2$, a \emph{$k$-th power modulo $\Phi$} is a word $w=w_1w_2\ldots w_k$ with for all $i\in\{2,\ldots, k\}$,
$w_i \approx_\Phi w_1$. If moreover $|w_1|=|w_2| = \ldots = |w_k|$ then it is a \emph{uniform $k$-th power modulo $\Phi$}. 
A \emph{square modulo $\Phi$} (resp., \emph{cube modulo $\Phi$}) is a 2nd power (resp., 3rd power) modulo $\Phi$. 
In this article, we only consider groups $(G,+)=(\mathbb{Z}^d,+)$ for some $d>0$.
We say that $(G,+)$ is \emph{k-repetitive} (resp., \emph{uniformly k-repetitive}) if for any alphabet $\alphabet$ and any morphism 
$\Phi: (\alphabet^*, . ) \rightarrow (G,+)$ every infinite word over $\alphabet$ contains a $k$-power modulo $\Phi$ (resp., a uniform $k$-power modulo $\Phi$).
Note that, for any integers $n$ and $k$, if $(\mathbb{Z}^{n+1},+)$ is $k$-repetitive then $(\mathbb{Z}^n,+)$ is uniformly $k$-repetitive.
Uniform $k$-th powers modulo $\Phi$ are sometimes called \emph{additive $k$-th powers}, without mention of the morphism $\Phi$, if the value of $\Phi(a)$ is clear in the context.
$\Phi$ can be seen as a linear map from the Parikh vector of a word to $\mathbb{Z}^d$, therefore we can associate to $\Phi$ the matrix $F_\Phi$ such that $\forall w\in \alphabet^*$, $\Phi(w) = F_\Phi \Psi(w)$.
Note that if $d= |\alphabet|$ and $F_\Phi$ is invertible then two words are abelian-equivalent if and only if they are $\Phi$-equivalent.
An application of Szemerédi's theorem shows that for $d=1$, for any finite alphabet $\alphabet$ and $k\in \mathbb{N}$,
it is not possible to avoid $k$-th power modulo $\Phi$ over $\alphabet$, that is, $(\mathbb{Z},+)$ is $k$-repetitive for any $k$.
On the other hand, whether $\mathbb{Z}$ is uniformly $2$-repetitive or not is a long standing open question \cite{Justin1972,introadditivepowers},
and \citeN{avoid3consblock} showed that $\mathbb{Z}$ is not uniformly $3$-repetitive.
We show on Theorem \ref{addsquareavoidable} that  $\mathbb{Z}^2$ is not uniformly $2$-repetitive.

Let $\operatorname{Suff}(w)$ (resp., $\operatorname{Pref}(w)$, $\operatorname{Fact}(w)$) be the set of suffixes (resp., prefixes, factors) of $w$. 
For any morphism $h$, let $\operatorname{Suff}(h) = \cup_{a\in\alphabet}\operatorname{Suff}(h(a))$, $\operatorname{Pref}(h) = \cup_{a\in\alphabet}\operatorname{Pref}(h(a)) $ and $\operatorname{Fact}(h) = \cup_{a\in\alphabet}\operatorname{Fact}(h(a))$.

Let $\operatorname{Fact^\infty}(h)=\cup_{i= 1}^\infty \operatorname{Fact}(h^i)$, and let $h^\infty$ be the infinite words in the closure of $\operatorname{Fact^\infty}(h)$. 
Note that for every $h$ with an eigenvalue of absolute value larger than $1$, 
$h^\infty$ is not empty since $\operatorname{Fact^\infty}(h)$ is infinite.
A word from $h^\infty$ which is a fixed point of $h$ is a \emph{pure morphic word}.
A \emph{morphic word} is the image of a pure morphic word by a morphism.

To a morphism $h$ on $\alphabet^*$, we associate a matrix $M_h$ on $\alphabet \times \alphabet$ such that $(M_h)_{a,b}=\vert h(b)\vert _a$.
The \emph{eigenvalues of $h$} are the eigenvalues of $M_h$.

Let $h: \alphabet^* \rightarrow \alphabet^*$ be a morphism, 
we say that $h$ is \emph{primitive} if there exists $k\in \mathbb{N}$ such that for all $a \in \alphabet$, $h^k(a)$ contains all the letters of $\alphabet$ (that is, $M_{h^k}$ is positive).
We have the following proposition. 
\begin{proposition}\label{prop:factinfprim}
Let $h$ be a primitive morphism on $\alphabet$ with $\vert \alphabet\vert>1$. Then for every fixed point $w$ of $h$, $\operatorname{Fact}(w)=\operatorname{Fact^\infty}(h)$.
\end{proposition}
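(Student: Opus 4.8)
The plan is to take an arbitrary fixed point $w$ of $h$ and prove the two inclusions $\operatorname{Fact}(w) \subseteq \operatorname{Fact^\infty}(h)$ and $\operatorname{Fact^\infty}(h) \subseteq \operatorname{Fact}(w)$. First I would observe that we may assume $w$ is infinite: by primitivity there is a $k$ with $M_{h^k}$ positive, so $h^k(a)$ contains every letter of $\alphabet$ and hence $|h^k(a)| \ge |\alphabet| \ge 2$ for every $a \in \alphabet$; if $w$ were finite and non-empty, then $h^k(w) = w$ would force $|w| = |h^k(w)| = \sum_{i=1}^{|w|} |h^k(w[i])| \ge 2|w|$, which is absurd (and if $h$ has no non-empty fixed point at all, the statement is vacuous). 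Write $w = a_0 a_1 a_2 \cdots$. Comparing $w$ with $h(w) = h(a_0) h(a_1) \cdots$ and using $h(w) = w$ shows that $h(a_0)$ is a prefix of $w$; applying $h$ repeatedly and invoking $h(w) = w$ each time then gives, by induction, that $h^n(a_0)$ is a prefix of $w$ for all $n \ge 0$. The same estimate, iterated, shows that $|h^n(a_0)|$ tends to infinity.

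For $\operatorname{Fact}(w) \subseteq \operatorname{Fact^\infty}(h)$, let $u \in \operatorname{Fact}(w)$. Then $u$ is a factor of some finite prefix $p$ of $w$, and since the prefixes $h^n(a_0)$ of $w$ have lengths going to infinity, $p$ is a prefix of $h^n(a_0)$ for $n$ large enough; hence $u \in \operatorname{Fact}(h^n(a_0)) \subseteq \operatorname{Fact}(h^n) \subseteq \operatorname{Fact^\infty}(h)$.

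For $\operatorname{Fact^\infty}(h) \subseteq \operatorname{Fact}(w)$, let $u \in \operatorname{Fact^\infty}(h)$ and pick $i \ge 1$ and $b \in \alphabet$ with $u \in \operatorname{Fact}(h^i(b))$. By primitivity the prefix $h^k(a_0)$ of $w$ contains the letter $b$, so $b = a_j$ for some $j$; applying $h^i$ to $w = a_0 a_1 \cdots$ and using $h^i(w) = w$ shows that $h^i(b) = h^i(a_j)$ occurs as a factor of $w$, whence $u \in \operatorname{Fact}(h^i(b)) \subseteq \operatorname{Fact}(w)$.

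There is no real obstacle here: the argument is elementary bookkeeping and primitivity is used in only two places --- to make the iterates $h^n(a_0)$ exhaust arbitrarily long prefixes of $w$ (for the first inclusion), and to guarantee that the letter $b$ underlying a prescribed element of $\operatorname{Fact^\infty}(h)$ actually occurs in $w$ (for the second). Both uses need $|\alphabet| > 1$; without it (for instance $\alphabet = \{a\}$, $h(a) = a$) the length growth collapses and the conclusion fails, so the one point deserving care is exactly the degenerate finite or empty fixed point dealt with at the outset.
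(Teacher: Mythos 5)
Your proof is correct and follows essentially the same route as the paper's: one inclusion via the unbounded prefixes $h^{n}(w[1])$ of $w$ (using primitivity for length growth), the other via every letter occurring in $w$ so that $\operatorname{Fact}(h^i(b))\subseteq\operatorname{Fact}(h^i(w))=\operatorname{Fact}(w)$. Your extra preliminary check that a primitive morphism on an alphabet of size at least $2$ has no finite non-empty fixed point is a harmless addition the paper leaves implicit.
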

\begin{proof}
Since $h$ is primitive, for every $a\in \alphabet$, $a\in \operatorname{Fact}(w)$, and for every $k$, $\operatorname{Fact}(h^k(a))\subseteq \operatorname{Fact}(h^k(w))=\operatorname{Fact}(w)$.
Thus $\operatorname{Fact^\infty}(h)\subseteq \operatorname{Fact}(w)$.
On the other hand, let $k$ be such that $(M_{h^k})_{a,b}>0$ for every $a,b\in \alphabet$. Then the function $f_a(l)=\vert h^{lk}(a) \vert$ is strictly increasing, with $f_a(l+1)\ge \vert \alphabet\vert f_a(l)$, and for every $v\in \operatorname{Fact}(w)$, there is a $l$ such that $v\in \operatorname{Fact}(h^{lk}(w[1]))$, and thus $v\in \operatorname{Fact^\infty}(h)$.
\end{proof}

\def\vect#1{\mathbf{#1}}
\def\nullvect{\overrightarrow{0}}
\def\nullvectset{\mathopen{}\left\{{\scriptstyle\overrightarrow{0}}\right\}\mathclose{}}

\paragraph{k-templates}
A \emph{k-template} is a $(2k)$-tuple $t =[a_1, \ldots, a_{k+1}, \vect{d}_1, \ldots, \vect{d}_{k-1}]$
where $a_i\in \alphabet\cup \{\varepsilon\}$ and $\vect{d}_i \in \mathbb{Z}^n$.
A word $w= a_1 w_1 a_2 w_2 \ldots w_k a_{k+1}$, where $w_i\in \alphabet^*$, 
is a \emph{realization} of (or \emph{realizes})
the template $t$ if for all $i\in\{1, \ldots,k-1 \}, \ \Psi(w_{i+1})-\Psi(w_i) = \vect{d}_i$.
A word is then an abelian $k$-th power if and only if it realizes the $k$-template $[\varepsilon, \ldots,\varepsilon, \nullvect, \ldots, \nullvect]$.
A template $t$ is \emph{realizable} by $h$ if there is a word in $\operatorname{Fact^\infty}(h)$ which realizes $t$.

Let $h$ be a morphism and let $t' =[a'_1, \ldots, a'_{k+1}, \vect{d'}_1, \ldots, \vect{d'}_{k-1}]$ and $t =[a_1, \ldots, a_{k+1}, \vect{d}_1, \ldots, \vect{d}_{k-1}]$ be two $k$-templates.
We say that \emph{$t'$ is a parent by $h$ of $t$} if there are 
$p_1, s_1, \ldots , p_{k+1},s_{k+1}\in \alphabet^*$ such that:
\begin{itemize}
 \item $\forall i\in\{1,\ldots, k+1\}$, $h(a'_i)= p_ia_is_i$,
 \item $\forall i\in\{1,\ldots, k-1\}$, $\vect{d}_i = M_h \vect{d'}_i+ \Psi(s_{i+1}p_{i+2}) - \Psi(s_ip_{i+1})$.
\end{itemize}
We denote by $\operatorname{Par}_h(t)$ the set of parents by $h$ of $t$. 
Note that, by definition, for any $t'\in \operatorname{Par}_h(t)$ if $t'$ is realizable by $h$ then $t$ is realizable by $h$.
We show in Proposition \ref{hasancestors} that if $t$ is realized by a long enough word from $\operatorname{Par}_h(t)$ then there is a realizable pattern 
$t'\in \operatorname{Par}_h(t)$.

A template $t$ is \emph{an ancestor} of a template $t'$ if there exists $n\ge 1$ and a sequence of templates $t'=t_1, t_2,\ldots, t_n=t$ such that for any $i$, 
$t_{i+1}$ is a parent of $t_{i}$.
A template $t'$ is a \emph{realizable ancestor by $h$} of a template $t$ if $t'$ is an ancestor by $h$ of $t$ 
and if there is a word in $\operatorname{Fact^\infty}(h)$ which realizes $t'$.
For a template $t$, we denote by $\operatorname{Anc}_h(t)$ (resp., $\operatorname{Ranc}_h(t)$) the set of all the ancestors (resp., realizable ancestors) by $h$ of $t$.
Note that a template $t$ is realized by a word from $\operatorname{Fact^\infty}(h)$ if and only if $\operatorname{Ranc}_h(t)$ is not empty.
We may omit ``by $h$'' if the morphism is clear in the context.

In the rest of this section we recall some classical notions from linear algebra.
\paragraph{Jordan decomposition}
A Jordan block $J_n(\lambda)$ is a $n\times n$ matrix with $\lambda\in \mathbb{C}$ on the diagonal, $1$ on top of the diagonal and $0$ elsewhere.
\[J_{n}(\lambda)=
\left(
 \begin{array}{cccc}
  \lambda & 1\\
    & \lambda & 1& \text{\huge0}\\
     \text{\huge0} &  & \ddots & 1\\
    & & &  \lambda
 \end{array}
\right)
\]
We recall the following well known proposition (see~\cite{WKTbook}).
\begin{proposition}[Jordan decomposition] \label{Jordan}
For any $n\times n$ matrix $M$ on $\mathbb{C}$, there is an invertible $n\times n$ matrix $P$ and a $n\times n$ matrix $J$ such that
$M= P J P^{-1}$, and the matrix $J$ is as follows:
\[
\left(
 \begin{array}{cccc}
  J_{n_1}(\lambda_1)\\
    & J_{n_2}(\lambda_2) & & 0\\
     0 &  &\ddots \\
    & & &  J_{n_p}(\lambda_p)
 \end{array}
\right)
\]
where the $J_i$ are Jordan blocks. $P J P^{-1}$ is a \emph{Jordan decomposition} of $M$.
\end{proposition}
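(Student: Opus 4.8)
The plan is to establish the Jordan decomposition in two stages: first decompose $\mathbb{C}^n$ into generalized eigenspaces of $M$, then put the action of $M$ on each of these into Jordan form by reducing to the nilpotent case. First I would use that over $\mathbb{C}$ the characteristic polynomial splits, say $\chi_M(x) = \prod_{i=1}^{p}(x-\mu_i)^{m_i}$ with the $\mu_i$ pairwise distinct. Since the factors $(x-\mu_i)^{m_i}$ are pairwise coprime, a B\'ezout argument (equivalently, the Chinese Remainder Theorem applied to the $\mathbb{C}[x]$-module $\mathbb{C}^n$ on which $x$ acts as $M$) produces $M$-invariant subspaces $W_i = \ker\bigl((M-\mu_i I)^{m_i}\bigr)$ with $\mathbb{C}^n = \bigoplus_{i=1}^{p} W_i$.

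The core step is the nilpotent case: if $N$ is a nilpotent operator on a finite-dimensional complex space $V$, then $V$ has a basis in which $N$ is block-diagonal with blocks of the form $J_{n_j}(0)$. I would prove this by induction on $\dim V$. The image $U = N(V)$ is a proper $N$-invariant subspace (proper since a nilpotent operator on a nonzero space is not surjective), so by induction $\restriction{N}{U}$ admits a basis consisting of $N$-chains. Pulling the leading vector of each chain back along $N$ and extending the resulting family by suitably chosen vectors of $\ker N$, one obtains a basis of $V$ whose $N$-chains give the claimed block form. Back on $W_i$, adding $\mu_i I$ turns a chain of length $\ell$ for $N_i := \restriction{(M-\mu_i I)}{W_i}$ into a genuine Jordan block $J_{\ell}(\mu_i)$.

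Finally I would concatenate the Jordan bases of the $W_i$ into a single basis $\mathcal{B}$ of $\mathbb{C}^n$; the matrix of $M$ in $\mathcal{B}$ is then a block-diagonal matrix $J$ of Jordan blocks, and letting $P$ be the matrix whose columns are the vectors of $\mathcal{B}$ gives $M = P J P^{-1}$. The main obstacle is the nilpotent normal form: making the induction produce $N$-chains that are simultaneously linearly independent and spanning takes care, both in lifting the chain-leaders of $U$ through $N$ and in choosing the complement inside $\ker N$. Everything else — the splitting of $\chi_M$, the primary decomposition, the reassembly of bases, and the translation by $\mu_i I$ — is routine, and since the statement is classical and is invoked here only to analyze the matrices $M_h$ appearing later, I would keep those parts brief.
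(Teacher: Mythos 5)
Your outline is a correct sketch of the standard proof: split the characteristic polynomial over $\mathbb{C}$, use the primary decomposition $\mathbb{C}^n=\bigoplus_i \ker\bigl((M-\mu_i I)^{m_i}\bigr)$, reduce to the nilpotent case, and handle that case by induction on the dimension (lifting the chain leaders of $N|_{\operatorname{im}N}$ through $N$ and completing with a complement of $\ker N\cap\operatorname{im}N$ inside $\ker N$, with the dimension count $\dim V=\dim\operatorname{im}N+\dim\ker N$ and a standard independence check closing the argument). There is nothing in the paper to compare against: Proposition~\ref{Jordan} is stated as a well-known classical fact and is only cited (see~\cite{WKTbook}), with no proof given, since the paper merely needs the decomposition $M_h=PJP^{-1}$ to define the maps $r_i$ and the contracting/expanding eigenspaces used later. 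So your proof is fine, and the only point genuinely requiring care is exactly the one you flag, the nilpotent normal form; for the purposes of this paper a citation, as the authors chose, is entirely sufficient.
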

The $\lambda_i$, $i\in\{1,\ldots, p\}$, are the (non necessarily distinct) eigenvalues of $M$, and the $n_i$ are their corresponding algebraic multiplicities.

Note that for every $k\ge 0$, $(J_n(\lambda))^k$ is the $n\times n$ matrix $M$ with $M_{i,j}= \binom{k}{j-i} \lambda^{k-j+i}$, with $\binom{a}{b} = 0$ if $a<b$ or $b<0$.
Thus, if $\vert \lambda \vert < 1$, $\sum_{k=0}^\infty (J_n(\lambda))^k$ is the matrix $N$ with $N_{i,j} = (1-\lambda)^{i-j-1}$ if $j\ge i$, and $0$ otherwise.
We can easily deduce from these observations the series of $k$-th powers of a matrix in Jordan normal form, and its sum.

We introduce some additional notations used in Propositions \ref{bound<1} and \ref{bound>1}.
Given a square matrix $M$ and $PJP^{-1}$ a Jordan decomposition of $M$,
let $b:\{1,\ldots, n\}\rightarrow \{1,\ldots, p\}$ be the function that associates to an index $i$ of $M$ the number corresponding to its Jordan block in the matrix $J$, 
thus $\forall i\in \{1,\ldots, n\}$, $ \lambda_{b(i)} = J_{i,i}$.
Let $B$ be the map that associate to an index $i$ the submatrix corresponding to the Jordan block containing this index,
$\forall i\in \{1,\ldots, n\}$, $B(i) =  J_{n_{b(i)}}(\lambda_{b(i)})$.
For any vector $\vect{x}$ and $1\leq i_s\leq i_e\leq n$ such that $i_s$ is the index of the first row of a Jordan block and $i_e$ is the index of the last row of the same block,
we denote by $\restriction{\vect{x}}{i_s,i_e}$ the sub-vector of $\vect{x}$ starting at index $i_s$ and ending at index $i_e$ and then  
$\restriction{(J \vect{x})}{i_s,i_e}= B(i) \restriction{\vect{x}}{i_s,i_e}$.
The columns $i_s$ to $i_e$ from $P$ generate the \emph{generalized eigenspace} associated to the eigenvalue $\lambda_i$.
Let $E_c(M)$ be the \emph{contracting eigenspace} of $M$, that is, the subspace generated by columns $i$ of $P$ such that $\vert \lambda_{b(i)} \vert < 1$. 
Similarly let $E_e(M)$ be the \emph{expanding eigenspace} of $M$, that is, the subspace generated by columns $i$ of $P$ such that $\vert \lambda_{b(i)} \vert > 1$. 
Note that $E_c(M)$ and $E_e(M)$ are independent from the Jordan decomposition we chose.

\paragraph{Smith decomposition}
The Smith decomposition is useful to solve systems of linear Diophantine equations.

\def\Ker#1{\ker(#1)}

\begin{proposition}[Smith decomposition]\label{Smith}
For any matrix $M\in \mathbb{Z}^{n\times m}$, there are $U\in\mathbb{Z}^{n\times n}$, $D\in \mathbb{Z}^{n\times m}$ and $V\in \mathbb{Z}^{m\times m}$ such that:
\begin{itemize}
 \item $D$ is diagonal (\emph{i.e.} $D_{i,j}=0$ if $i\ne j$),
 \item $U$ and $V$ are unimodular (\emph{i.e.}, their determinant is $1$ or $-1$),
 \item  $M = UDV$.
\end{itemize}
\end{proposition}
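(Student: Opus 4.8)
The plan is to give the classical constructive proof, reducing $M$ to diagonal form by integer row and column operations. Every elementary integer row operation on an $n\times m$ matrix---swapping two rows, multiplying a row by $-1$, or adding an integer multiple of one row to another---amounts to left multiplication by a matrix of $\mathbb{Z}^{n\times n}$ of determinant $\pm 1$; symmetrically, the analogous column operations are right multiplications by matrices of $\mathbb{Z}^{m\times m}$ of determinant $\pm 1$. Since unimodular matrices are closed under products and inverses, it suffices to exhibit a finite sequence of such operations turning $M$ into a diagonal matrix $D$: collecting the left multiplications into $U^{-1}$ and the right ones into $V^{-1}$ then gives $M=UDV$ with $U,V$ unimodular.

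I would argue by induction on $\min(n,m)$; the cases $\min(n,m)=0$ and $M=0$ are immediate. Assume $M\neq 0$. The set of nonzero absolute values of entries reachable from $M$ by row and column operations is a nonempty set of positive integers, so it has a minimum; fix a sequence of operations attaining it, call the resulting matrix $M$ again, and use row and column swaps to move a witnessing entry to position $(1,1)$. Write $d=M_{1,1}$. The crucial claim is that $d$ divides every entry of $M$: if $d\nmid M_{1,j}$, Euclidean division gives $M_{1,j}=qd+r$ with $0<|r|<|d|$, and subtracting $q$ times column $1$ from column $j$ creates the entry $r$, contradicting minimality; the same argument with rows handles the first column, and if $d\nmid M_{i,j}$ with $i,j\geq 2$ one first adds row $i$ to row $1$ and reduces to the first case.

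Since $d$ now divides every entry of the first row and first column, subtract suitable integer multiples of the first column from the others and of the first row from the others to reach the block form $\left(\begin{smallmatrix} d & 0 \\ 0 & M'\end{smallmatrix}\right)$ with $M'\in\mathbb{Z}^{(n-1)\times(m-1)}$; these operations keep all entries divisible by $d$, so in particular $d$ divides every entry of $M'$. By the induction hypothesis $M'=U'D'V'$ with $D'$ diagonal and $U',V'$ unimodular; padding $U'$ and $V'$ with a leading row and column equal to those of the identity and setting $D=\left(\begin{smallmatrix} d & 0 \\ 0 & D'\end{smallmatrix}\right)$ completes the construction. (One even gets $d$ dividing the first diagonal entry of $D'$, hence the usual divisibility chain, but the statement does not require it.)

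The only delicate point is the divisibility claim together with the legitimacy of the minimization: both rest on the observation that the nonzero absolute values reachable from $M$ form a set of positive integers, hence admit a minimum, and that any failure of divisibility would let us strictly decrease that minimum. Everything else is routine block bookkeeping, so I expect no real obstacle beyond setting up this minimality argument carefully.
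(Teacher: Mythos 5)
The paper does not prove this proposition at all: it is quoted as a classical fact (Smith normal form) and only its use for solving linear Diophantine systems is discussed, so there is no proof of record to compare against. Your argument is the standard constructive proof by integer row and column operations with a minimality-plus-Euclidean-division argument, and it is essentially correct. One small ordering point deserves care: in the case $d\nmid M_{i,j}$ with $i,j\ge 2$, adding row $i$ to row $1$ changes the pivot entry to $d+M_{i,1}$, so the ``first case'' (division by the $(1,1)$ entry $d$) does not literally apply and the remainder you produce need not be smaller than $\vert d\vert$. The fix is immediate: since you have already shown $d\mid M_{i,1}$, first subtract $(M_{i,1}/d)$ times row $1$ from row $i$ (or clear the whole first row and column before treating the interior entries, as in the usual presentation), so that adding row $i$ to row $1$ leaves the pivot equal to $d$ and the Euclidean-division step yields a nonzero entry of absolute value strictly less than $\vert d\vert$, contradicting minimality. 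With that adjustment the induction and the block bookkeeping go through exactly as you describe, and you even recover the divisibility chain $d\mid D'_{1,1}$, which the statement indeed does not require.
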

The fact that $U$ and $V$ are unimodular tells us that they are invertible over the integers. 
If one want to find integer solutions $\vect{x}$ of the equation $M\vect{x}=\vect{y}$, where $M$ is an integer matrix and $\vect{y}$ an integer vector, one can use the Smith decomposition $UDV$ of $M$. 
One can suppose {w.l.o.g.} than $n=m$, otherwise, one can fill with zeros.
Then $DV\vect{x}=U^{-1}\vect{y}$. 
Integer vectors in $\Ker{M}$ form a lattice $\Lambda$. The set of columns $i$ in $V^{-1}$ such that $D_{i,i}=0$ gives a basis of $\Lambda$.
Let $\vect{y'}=U^{-1}\vect{y}$, which is also an integer vector. 
Finding the solution $\vect{x'}$ of $D\vect{x'}=\vect{y'}$ is easy, since $D$ is diagonal. 
The set of solutions is non-empty if and only if for every $i$ such that $D_{i,i}=0$, $\vect{y'}_i=0$ and $D_{i,i}$ divides $\vect{y'}_i$ otherwise. 
One can take $\vect{x_0}=V^{-1}\vect{x'_0}$ as a particular solution to $M\vect{x_0}=\vect{y}$, with $(\vect{x'_0})_i = 0$ if $D_{i,i}=0$, and $(\vect{x'_0})_i=\vect{y'}_i/D_{i,i}$ otherwise.
The set of solutions is then $\vect{x_0}+\Lambda$.

\bigskip

We denote by $||\vect{x}||$ the Euclidean norm of a vector $\vect{x}$.
For any matrix $M$, let $||M||$ be its norm induced by the Euclidean norm, 
that is $||M|| = \sup \left \{ \frac{||M\vect{x}||}{||\vect{x}||} : \vect{x} \ne \nullvect\right\}$. 
We will use the following classical Proposition from linear algebra (see~\cite{WKTbook}).
\begin{proposition}\label{WKT}
Let $M$ be a matrix, and let $\mu_{min}$ (resp., $\mu_{max}$) be the minimum (resp., maximum) over the eigenvalues of $M^*M$ (which are all real and non-negative).
Then for any $\vect{x}$:
$$\mu_{min}||\vect{x}||^2 \le ||M\vect{x}||^2 \le \mu_{max}||\vect{x}||^2.$$
\end{proposition}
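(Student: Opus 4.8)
The statement is the classical fact that the extreme singular values of $M$ control how much $M$ can dilate or contract a vector, and the natural plan is to diagonalize the Hermitian matrix $M^*M$ in an orthonormal eigenbasis. First I would observe that $M^*M$ is Hermitian, since $(M^*M)^* = M^*M$, so the spectral theorem provides an orthonormal basis $\vect{v}_1,\ldots,\vect{v}_n$ of eigenvectors of $M^*M$, say $M^*M\,\vect{v}_i = \mu_i\vect{v}_i$ with $\mu_i\in\R$. Each $\mu_i$ is non-negative, because $\mu_i = \mu_i||\vect{v}_i||^2 = \langle M^*M\,\vect{v}_i,\vect{v}_i\rangle = \langle M\vect{v}_i,M\vect{v}_i\rangle = ||M\vect{v}_i||^2 \ge 0$; this proves the parenthetical claim in the statement, and we may write $\mu_{min} = \min_i\mu_i$ and $\mu_{max} = \max_i\mu_i$.

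Next, given an arbitrary vector $\vect{x}$, I would expand it in this basis as $\vect{x} = \sum_i c_i\vect{v}_i$. By orthonormality, $||\vect{x}||^2 = \sum_i|c_i|^2$, and a direct computation gives $||M\vect{x}||^2 = \langle M\vect{x},M\vect{x}\rangle = \langle \vect{x},M^*M\,\vect{x}\rangle = \sum_i\mu_i|c_i|^2$. Since $\mu_{min}\le\mu_i\le\mu_{max}$ for every $i$, summing the inequalities $\mu_{min}|c_i|^2 \le \mu_i|c_i|^2 \le \mu_{max}|c_i|^2$ over $i$ yields precisely $\mu_{min}||\vect{x}||^2 \le ||M\vect{x}||^2 \le \mu_{max}||\vect{x}||^2$.

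There is essentially no obstacle here; the only step that deserves a word of care is the appeal to the spectral theorem in its complex Hermitian form, which is what guarantees that the eigenbasis of $M^*M$ can be taken orthonormal (so that the Pythagorean identities $||\vect{x}||^2 = \sum_i|c_i|^2$ and $||M\vect{x}||^2 = \sum_i\mu_i|c_i|^2$ are valid). One may additionally remark that both bounds are tight, being attained when $\vect{x}$ is an eigenvector for $\mu_{min}$, respectively $\mu_{max}$, so that in particular $||M||^2 = \mu_{max}$; this refinement is not needed for the way the proposition is used in the sequel, but it explains why these are the right constants.
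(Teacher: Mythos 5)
Your proof is correct and is the standard spectral-theorem argument for this fact; the paper itself gives no proof, stating the proposition as a classical result with a citation to a linear algebra reference, and your argument is exactly the canonical one that reference would supply (including the tightness remark $||M||^2=\mu_{max}$, which matches how the paper uses the bound). Nothing to add or correct.
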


\section{The Abelian-power-free case}\label{sec:main}
In this section, we show the following theorem.
\begin{theorem}\label{decidingtemplates}
  For any primitive morphism $h$ with no eigenvalue of absolute value 1 and any template $t_0$ it is possible to decide 
  if $\operatorname{Fact^\infty}(h)$ realizes $t_0$.
\end{theorem}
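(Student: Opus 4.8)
The goal is a decision procedure: given a primitive morphism $h$ with no eigenvalue of modulus $1$ and a template $t_0$, decide whether $\operatorname{Fact^\infty}(h)$ realizes $t_0$. The strategy is a ``desubstitution'' / tree-pruning argument in the spirit of \cite{carpi1,fixedpointpowerfree}, but carried out on templates rather than words. The key structural fact, noted after the definition of parents, is that if $t'\in\operatorname{Par}_h(t)$ is realizable then so is $t$; conversely (this is what Proposition~\ref{hasancestors} is promised to give) if $t$ is realized by a \emph{sufficiently long} word, then some parent $t'\in\operatorname{Par}_h(t)$ is realizable. Iterating, a realization of $t_0$ by a long word forces a realizable ancestor, and repeatedly taking parents either exhibits a short realizing word (which can be checked by brute force on a long enough prefix of a fixed point, using Proposition~\ref{prop:factinfprim}) or produces an infinite chain of ancestors. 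The whole procedure terminates provided we can bound the search: we need that only finitely many templates can occur as ancestors of $t_0$, so that the ancestor graph is finite and can be explored exhaustively.

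**The main steps.** First I would make precise the bootstrapping: show that $t_0$ is realizable iff it has a realizable ancestor that is realized by a word of length below an explicit threshold $L$ depending only on $h$ and $t_0$ (combining Proposition~\ref{hasancestors} with the observation that passing from $t$ to a parent $t'$ shrinks the realizing word by roughly a factor $\min_a|h(a)|$, up to an additive constant coming from the $p_i,s_i$). Second, and this is the heart of the matter, I would bound the set $\operatorname{Anc}_h(t_0)$: the letter-entries $a_i$ range over the finite set $\alphabet\cup\{\varepsilon\}$, so the only danger is the vector part $\vect d_i\in\mathbb{Z}^n$ blowing up. Here the recurrence $\vect d_i = M_h\vect d'_i + \Psi(s_{i+1}p_{i+2})-\Psi(s_ip_{i+1})$ — read backwards, $\vect d'_i = M_h^{-1}(\vect d_i - c)$ for a bounded correction vector $c$ — is exactly where the hyperbolicity hypothesis (no eigenvalue of modulus $1$) enters. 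Using the Jordan decomposition (Proposition~\ref{Jordan}) and the splitting $\mathbb{Z}^n\otimes\mathbb{R}=E_c(M_h)\oplus E_e(M_h)$, the backward iteration contracts the component in $E_e(M_h)$ and expands the one in $E_c(M_h)$; but a vector whose $E_c$-component is too large cannot be the difference of two Parikh vectors of factors of a realization (whose lengths are controlled), so realizable ancestors must lie in a bounded region. The quantitative version of this — an invariant region, something like $\{\vect d : \|\,\text{proj}_{E_e}\vect d\,\|\le R_e,\ \|\,\text{proj}_{E_c}\vect d\,\|\le R_c\}$ stable under taking parents — is what Propositions~\ref{bound<1} and \ref{bound>1} are evidently set up to provide, using Proposition~\ref{WKT} to convert eigenvalue bounds into norm bounds, and the geometric-series computation for $\sum_k J_n(\lambda)^k$ when $|\lambda|<1$.

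**Assembling the algorithm.** Given the finite bound on $\operatorname{Anc}_h(t_0)$, the decision procedure is: (i) compute the finite set $\mathcal A$ of all templates reachable from $t_0$ by iterated parent-taking and lying in the invariant region; (ii) for each $t\in\mathcal A$, test directly whether $t$ is realized by some word in $\operatorname{Fact^\infty}(h)$ of length $\le L$ — this is decidable because by Proposition~\ref{prop:factinfprim} it suffices to inspect factors of a single long prefix of a fixed point of $h$; (iii) answer ``yes'' iff some such short realization is found, propagating realizability downward along parent-edges back to $t_0$. Correctness in the ``yes'' direction is immediate from ``$t'$ realizable $\Rightarrow t$ realizable''; in the ``no'' direction one uses the contrapositive of the bootstrapping step: if $t_0$ were realizable it would be realized by arbitrarily short — in fact by a bounded-length — word after finitely many desubstitutions, all happening inside $\mathcal A$, and that case is exhausted by step (ii).

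**Where the difficulty lies.** The routine parts are the finite search and the downward/upward propagation of realizability. The two genuine obstacles are: bounding the realizing length after one desubstitution (Proposition~\ref{hasancestors}) — one must check that a long realization of $t$, when cut along the $h$-images of letters, actually yields a realization of \emph{some} parent $t'$, handling carefully the boundary blocks $p_i a_i s_i$ and the case where some $a'_i=\varepsilon$; and, more seriously, proving that the set of \emph{realizable} ancestors is finite. The latter is delicate precisely because taking parents \emph{expands} the contracting-eigenspace component, so a naive forward bound fails; one must argue that ancestors with a large $E_c(M_h)$-component simply cannot be realized, because $\|\vect d_i\|=\|\Psi(w_{i+1})-\Psi(w_i)\|\le|w_{i+1}|+|w_i|$ and the $|w_i|$ are themselves bounded along the chain by the length-contraction of step~(i). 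Making these two bounds compatible — length contracts under desubstitution while the vector part lives in a hyperbolic-fixed invariant box — is the crux, and it is exactly what the hypothesis ``no eigenvalue of absolute value $1$'' is there to guarantee.
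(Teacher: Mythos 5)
Your overall architecture coincides with the paper's: bound the ancestors of $t_0$ that can matter by exploiting the hyperbolic splitting (expanding directions bounded because taking parents contracts them, contracting directions bounded by realizability), then use Proposition~\ref{hasancestors} to reduce realizability of $t_0$ to the existence of a bounded-length factor realizing one of finitely many templates, which is checkable since primitivity lets one enumerate factors (Proposition~\ref{prop:factinfprim}). However, your justification of what you yourself call the crux --- why templates with a large $E_c(M_h)$-component can be discarded --- does not work as stated. You argue that $\Vert\vect{d}_i\Vert=\Vert\Psi(w_{i+1})-\Psi(w_i)\Vert$ stays small because ``the $|w_i|$ are themselves bounded along the chain by the length-contraction'' of your bootstrapping step. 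This is circular and, as stated, false: the words produced by desubstituting a realization of $t_0$ are not of bounded length (they shrink from an arbitrarily long $w$ down to the stopping point), and the stopping threshold in Proposition~\ref{hasancestors} depends on $\Delta(t)$ of the template currently realized, i.e.\ exactly on the quantity you are trying to bound; for the same reason your ``First'' step (a threshold $L$ depending only on $h$ and $t_0$) cannot precede the finiteness bound --- it needs it. The missing fact is Proposition~\ref{bound<1}, and it is not a length argument: for \emph{every} factor $u\in\operatorname{Fact^\infty}(h)$, however long, the coordinates $r_i(\Psi(u))$ along contracting Jordan blocks are uniformly bounded, because $u$ decomposes as $\left(\prod_j h^j(s_j)\right)h^l(u')\left(\prod_j h^j(p_j)\right)$ with $s_j,p_j$ ranging over the finite sets $\operatorname{Suff}(h),\operatorname{Pref}(h)$, and $\sum_j B(i)^j$ converges when $|\lambda_{b(i)}|<1$. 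This uniform bound (defining $\mathcal{R}_B$) is what guarantees that every template realized anywhere in $\operatorname{Fact^\infty}(h)$ --- in particular every template met along your desubstitution chain --- has bounded contracting coordinates; only after combining it with the bound on expanding coordinates of ancestors (Proposition~\ref{bound>1}) is $\mathcal{R}_B\cap\operatorname{Anc}_h(t_0)$ finite, and only then can one set $s=\max_{t\in S}k\bigl(\tfrac{(k-1)\Delta(t)}{2}+\delta+1\bigr)+1$ and prove the equivalence of Proposition~\ref{factorvsdiscrset}.

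A secondary point: you compute parents by writing $\vect{d'}_i=M_h^{-1}(\vect{d}_i-c)$, but the hypothesis only excludes eigenvalues of absolute value $1$, so $M_h$ may be singular (the paper's own $h_6$ has eigenvalue $0$ with multiplicity $3$). The parent equation $M_h\vect{d'}_i=\vect{d}_i-c$ then has either no integer solution or an infinite coset $\vect{x_0}+\Lambda$ with $\Lambda=\Ker{M_h}\cap\mathbb{Z}^n$; the paper prunes this to a finite computable set via the Smith decomposition together with the contracting bounds (using that $\Ker{M_h}$ lies in the generalized eigenspace of the eigenvalue $0$). For bare decidability one could instead enumerate all integer vectors of the bounded region in the $P$-basis, but some such step must be spelled out --- your formula silently assumes invertibility.
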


With the Proposition \ref{prop:factinfprim}, we get the following corollary.

\begin{Corollary}\label{decidingavoidab}
  For any primitive morphism $h$ with no eigenvalue of absolute value 1 it is possible to decide 
  if the fixed points of $h$ are abelian-$k$-th-power-free.
\end{Corollary}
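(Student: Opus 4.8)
The plan is to deduce this directly from Theorem~\ref{decidingtemplates}: the only work is to express ``$\operatorname{Fact^\infty}(h)$ contains an abelian $k$-th power as a factor'' as the realizability of finitely many explicit templates, and to pass from the fixed points of $h$ to $\operatorname{Fact^\infty}(h)$.

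First I would dispose of the degenerate cases. Whether $h$ has an infinite fixed point is decidable, since it depends only on the first letters of the words $h(a)$; if $h$ has none, the statement is vacuously true. If $|\alphabet|=1$, then the unique fixed point $a^\omega$ contains the abelian $k$-th power $a^{k}$, so the answer is negative. Otherwise, let $w$ be a fixed point of $h$. Since $h$ is primitive and $|\alphabet|>1$, Proposition~\ref{prop:factinfprim} gives $\operatorname{Fact}(w)=\operatorname{Fact^\infty}(h)$; in particular all fixed points share this factor set, and the fixed points of $h$ are abelian-$k$-th-power-free if and only if $\operatorname{Fact^\infty}(h)$ contains no non-empty factor that is an abelian $k$-th power.

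Next I would encode abelian $k$-th powers as templates, taking care to rule out the empty word. A non-empty word $v_1v_2\cdots v_k$ is an abelian $k$-th power iff, writing $c$ for the first letter of $v_1$ and $v_1=c\,v_1'$, one has $\Psi(c\,v_1')=\Psi(v_2)=\cdots=\Psi(v_k)$. So for each letter $c\in\alphabet$ I would form the $k$-template
$$t_c=[\,c,\varepsilon,\ldots,\varepsilon,\ \Psi(c),\nullvect,\ldots,\nullvect\,],$$
with $k$ occurrences of $\varepsilon$ and with $k-1$ displacement vectors, the first of which is $\Psi(c)$ (the unit vector supported on coordinate $c$) and the rest $\nullvect$. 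A word realizes $t_c$ precisely when it has the form $c\,w_1w_2\cdots w_k$ with $\Psi(c\,w_1)=\Psi(w_2)=\cdots=\Psi(w_k)$; any such word is non-empty, and since abelian-equivalent words have equal length it is exactly an abelian $k$-th power of period $1+|w_1|\ge1$, while conversely every abelian $k$-th power arises this way. Hence $\operatorname{Fact^\infty}(h)$ contains an abelian $k$-th power if and only if $t_c$ is realizable by $h$ for some $c\in\alphabet$.

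Finally, $h$ is primitive with no eigenvalue of absolute value $1$, so Theorem~\ref{decidingtemplates} decides, for each of the finitely many $c\in\alphabet$, whether $\operatorname{Fact^\infty}(h)$ realizes $t_c$; combining this with the reductions above gives the decision procedure. For this corollary there is no serious obstacle: all the substance is in Theorem~\ref{decidingtemplates}, and the only points that need a little care are choosing the templates $t_c$ rather than the all-$\varepsilon$ template, so that realizations are automatically non-empty, and invoking Proposition~\ref{prop:factinfprim} so that the property does not depend on which fixed point of $h$ is chosen.
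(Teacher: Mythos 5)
Your proposal is correct and takes essentially the same route as the paper, which obtains the corollary by applying Theorem~\ref{decidingtemplates} to the template $[\varepsilon,\ldots,\varepsilon,\overrightarrow{0},\ldots,\overrightarrow{0}]$ encoding abelian $k$-th powers and invoking Proposition~\ref{prop:factinfprim} to identify the factors of the fixed points with $\operatorname{Fact^\infty}(h)$. Your letter-anchored templates $t_c$ are only a minor refinement (making realizations automatically non-empty, a point the paper leaves implicit), not a different method.
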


The main difference with the algorithm from \citeN{fixedpointpowerfree} is that we allow $h$ to have eigenvalues of absolute value less than $1$, and this is required for the algorithms presented in Section~\ref{sec:app}.

The remaining of this section is devoted to the proof of Theorem \ref{decidingtemplates}.
The idea is to compute a finite set $S$ such that $\operatorname{Ranc}_h(t_0) \subseteq S \subseteq \operatorname{Anc}_h(t_0)$, and to show that if $w\in \operatorname{Fact^\infty}(h)$ realizes $t_0$ then there is a small factor of $w$ which realizes a template in $S$. 
Thus $\operatorname{Fact^\infty}(h)$ realizes $t_0$ if and only if a small factor realizes a template in $S$.

In this section, we take a primitive morphism $h$ on the alphabet $\alphabet$, and let $n=\vert \alphabet\vert$. 
Since the case $\vert \alphabet \vert =1$ is trivial, we suppose $n\ge 2$.
Moreover, we take a $k$-template $t_0$, for a $k\in \mathbb{N}$.
Let $M=M_h$ be the matrix associated to $h$, \emph{i.e.} $\forall i, j$, $M_{i,j}= |h(j)|_i$.
We have the following equality:
$$\forall w\in \alphabet^*,\ \Psi(h(w))= M \Psi(w).$$

We suppose that $M$ has no eigenvalue of absolute value 1 and that it has at least one eigenvalue of absolute value greater than 1.
From Proposition \ref{Jordan} there is an invertible matrix $P$ and a Jordan matrix $J$ such that $M= P J P^{-1}$.  
Thus $ P^{-1} M=  J P^{-1}$, and for any vector $\vect{x}$, $ P^{-1} M \vect{x}=  J P^{-1} \vect{x} $. 
We define the map $r$, such that $r(\vect{x})=P^{-1} \vect{x} $ and its projections $\forall i,\ r_i(\vect{x}) = (P^{-1} \vect{x})_i$.
Using this notation we have for any $w$, $r(\Psi(h(w)))= r(M\Psi(w))= Jr(\Psi(w))$. 

\paragraph{Bounds on the $P$ basis}
We show that for any vector $\vect{x}$ appearing on a realizable ancestor of any template $t_0$ and any $i$, $|r_i(\vect{x})|$ is bounded,
handling separately generalized eigenvectors of eigenvalues of absolute value less and more than $1$. 
It implies that there are finitely many such integer vectors, since columns of $P$ form a basis of $\mathbb{C}^n$. 

\begin{proposition}\label{bound<1}
For any $i$ such that $|\lambda_{b(i)}| <1$, $\{|r_i(\Psi(w))| : w \in \operatorname{Fact^\infty}(h)\}$ is bounded.
\end{proposition}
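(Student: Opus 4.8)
The plan is to combine the recursive structure of $\operatorname{Fact^\infty}(h)$ with the fact that a Jordan block whose eigenvalue has modulus $<1$ acts as a contraction. First I would record the elementary decomposition of factors: if $w\in\operatorname{Fact}(h^m(a))$ with $m\ge 2$, write $h^{m-1}(a)=b_1\cdots b_\ell$, so $h^m(a)=h(b_1)\cdots h(b_\ell)$; then either $w$ lies inside a single block $h(b_j)$ (hence $w\in\operatorname{Fact}(h)$), or $w=s\,h(w')\,p$ with $s\in\operatorname{Suff}(h)$, $p\in\operatorname{Pref}(h)$ and $w'$ a (possibly empty) factor of $h^{m-1}(a)$. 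Consequently there is a \emph{finite} set $\mathcal B\subseteq\alphabet^*$ — built from $\operatorname{Fact}(h)$ together with the concatenations $sp$, $s\in\operatorname{Suff}(h)$, $p\in\operatorname{Pref}(h)$ — such that every $w\in\operatorname{Fact^\infty}(h)$ either belongs to $\mathcal B$ or equals $s\,h(w')\,p$ with $w'\in\operatorname{Fact^\infty}(h)$ of strictly smaller ``$h$-depth'' (the least $m$ with $w'\in\operatorname{Fact}(h^m)$), so iterating the decomposition terminates.

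Next I would push this through the map $r=P^{-1}(\cdot)$. Using $\Psi(h(w'))=M\Psi(w')$ and $P^{-1}M=JP^{-1}$, the decomposition $w=s\,h(w')\,p$ gives $r(\Psi(w))=r(\Psi(sp))+J\,r(\Psi(w'))$. Fix an index $i$ with $|\lambda_{b(i)}|<1$, let $i_s,\dots,i_e$ be the rows of its Jordan block and $B=B(i)$ that block. Restricting the identity to these coordinates, $\restriction{r(\Psi(w))}{i_s,i_e}=\restriction{r(\Psi(sp))}{i_s,i_e}+B\,\restriction{r(\Psi(w'))}{i_s,i_e}$. Unfolding the recursion until some word $w_0\in\mathcal B$ is reached, after $N$ steps, yields
$$\restriction{r(\Psi(w))}{i_s,i_e}=B^{N}\,\restriction{r(\Psi(w_0))}{i_s,i_e}+\sum_{j=0}^{N-1}B^{j}\,\restriction{r(\Psi(s_jp_j))}{i_s,i_e},$$
where each $s_j$, $p_j$ ranges over the finite sets $\operatorname{Suff}(h)$, $\operatorname{Pref}(h)$.

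To finish, note that $\mathcal B$, $\operatorname{Suff}(h)$, $\operatorname{Pref}(h)$ are finite, so there is a constant $C$ with $\|r(\Psi(v))\|\le C$ for every $v$ that can occur as $w_0$ or $s_jp_j$. Since $|\lambda_{b(i)}|<1$, the entries of $B^k$ are, up to binomial coefficients, of the form $\binom{k}{l}\lambda_{b(i)}^{k-l}$ and are absolutely summable in $k$ (this is exactly the computation recalled after Proposition~\ref{Jordan}), so $\sigma:=\sum_{k\ge 0}\|B^k\|<\infty$ and in particular $\sup_k\|B^k\|<\infty$. Hence $\|\restriction{r(\Psi(w))}{i_s,i_e}\|\le C\sup_k\|B^k\|+C\sigma$, a bound independent of $w$, and therefore $|r_i(\Psi(w))|\le\|\restriction{r(\Psi(w))}{i_s,i_e}\|$ is bounded over $w\in\operatorname{Fact^\infty}(h)$. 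The main obstacle is getting the decomposition step exactly right — carefully handling the two boundary blocks $h(b_j)$ and $h(b_{j'})$, and verifying that the ``middle'' word $w'$ really lies in $\operatorname{Fact^\infty}(h)$ with smaller depth so that the unfolding terminates; everything else is routine bookkeeping together with the standard contracting-Jordan-block estimate.
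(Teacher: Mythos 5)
Your proof is correct and follows essentially the same route as the paper: you desubstitute a factor of $\operatorname{Fact^\infty}(h)$ into iterated $h$-images of suffix/prefix pieces around a bounded core, apply $r=P^{-1}(\cdot)$ so that $M$ becomes $J$, restrict to the contracting Jordan block $B(i)$, and conclude from the summability of $\sum_j \|B(i)^j\|$ together with the finiteness of $\operatorname{Suff}(h)$, $\operatorname{Pref}(h)$ and the base set. The paper's proof writes the same telescoped decomposition directly (and then adds a refinement, not needed for boundedness, to get an explicit numerical bound), so no substantive difference.
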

\begin{proof}
Take $i$ such that $|\lambda_{b(i)}| <1$, and let $i_s$ (resp., $i_e$) be the index that starts (resp., ends) the Jordan block $b(i)$ (thus $i_s\le i\le i_e$). 
Let $w$ be a factor of $\operatorname{Fact^\infty}(h)$. 
Then there is a factor $w'\in \operatorname{Fact}(h)$, an integer $l$ and for every $j \in \{0,\ldots, l-1\}$, a pair of words $(s_j, p_j) \in (\operatorname{Suff}(h),\operatorname{Pref}(h))$ such that:
$$w=\left( \prod_{j=0}^{l-1} h^j(s_j) \right) h^{l}(w') \left(\prod_{j=l-1}^0 h^j(p_j)\right).$$
Thus
$$r(\Psi(w)) = \sum_{j=0}^{l-1} J^j r(\Psi(s_j)) + J^{l} r(\Psi(w')) + \sum_{j=0}^{l-1} J^j r(\Psi(p_j))$$
and
$$r(\Psi(w))_{[i_s, i_e]} = \sum_{j=0}^{l-1}B(i)^j r(\Psi(s_jp_j))_{[i_s, i_e]} + B(i)^{l} r(\Psi(w'))_{[i_s, i_e]}.$$ 

Since $\lim_{l\to \infty} \left(\sum_{j=0}^{l} B(i)^j\right)$ exists, $\vert r_i(\Psi(w)) \vert$ is bounded.

More precisely, a bound for $\vert r_i(\Psi(w)) \vert $ can be found by the following way.
Working on the free group on $\alphabet$ and using the fact that $h$ is primitive, for every $l'> l$ one can find $a\in \{x^{-1} : x\in \alphabet\}$ and extend the sequence $(s_j,p_j)_{j\in \{0,\ldots, l-1\}}$ to the sequence $(s_j,p_j)_{j\in \{0,\ldots, l'-1\}}$ such that: 
$$w=\left( \prod_{j=0}^{l'-1} h^j(s_j) \right) h^{l'}(a) \left(\prod_{j=l'-1}^0 h^j(p_j)\right).$$
Thus there is an infinite sequence $(s_j,p_j)_{j\in \mathbb{N}}$ of elements in $(\operatorname{Suff}(h),\operatorname{Pref}(h))$ such that
$$r(\Psi(w))_{[i_s, i_e]}  = \sum_{j=0}^\infty B(i)^j r(\Psi(s_j p_j))_{[i_s, i_e]} .$$
For any $i$ such that $|\lambda_{b(i)}| <1$, $r_i(\Psi(w))$ is bounded by $\vect{u}\cdot \vect{v}$, where:
\begin{itemize}
\item $\vect{v}$ is the vector such that $\vect{v}_j= (1-\vert\lambda_{b(i)}\vert)^{i-j-1}$ if $j\in \{i,\ldots,i_e\}$, and zero otherwise,
\item $\vect{u}$ is the vector such that $\vect{u}_j\!=\!\max\{ \vert r_j(\Psi(sp)) \vert \!:\! (s,p) \in (\operatorname{Suff}(h),\operatorname{Pref}(h)) \}$.
 \qed %here
\end{itemize}\end{proof}

Let $r_{i}^* = {2} \times \operatorname{max} \{|r_i(\Psi(w))| : w \in \operatorname{Fact^\infty}(h)\}$.
Let $\mathcal{R}_B$ be the set of templates $t =[a_1,\ldots, a_{k+1}, \vect{d}_1, \ldots, \vect{d}_{k-1}]$ such that for every $i$ with $\vert \lambda_{b(i)} \vert <1$ and $j\in\{1,\ldots , k-1\}$, $|r_i(\vect{d}_j)|\leq r_i^*$.

\begin{Corollary}
Every template which is realized by $h$ is in $\mathcal{R}_B$.
\end{Corollary}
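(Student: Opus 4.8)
The plan is to peel off the definitions and reduce the statement to linearity of $r$ together with Proposition~\ref{bound<1}. Recall that $t =[a_1,\ldots, a_{k+1}, \vect{d}_1, \ldots, \vect{d}_{k-1}]$ being realized by $h$ means there is a word $w = a_1 w_1 a_2 w_2 \cdots w_k a_{k+1} \in \operatorname{Fact^\infty}(h)$, with each $w_j \in \alphabet^*$, such that $\Psi(w_{j+1}) - \Psi(w_j) = \vect{d}_j$ for all $j \in \{1, \ldots, k-1\}$. So I must show that $|r_i(\vect{d}_j)| = |r_i(\Psi(w_{j+1}) - \Psi(w_j))| \le r_i^*$ whenever $|\lambda_{b(i)}| < 1$, since this is exactly the membership condition for $\mathcal{R}_B$.

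The one point to record before the computation is that $\operatorname{Fact^\infty}(h)$ is closed under taking factors: if $v$ is a factor of a word in $\operatorname{Fact}(h^i)$ then $v \in \operatorname{Fact}(h^i) \subseteq \operatorname{Fact^\infty}(h)$. Consequently each block $w_j$, being a factor of $w$, itself lies in $\operatorname{Fact^\infty}(h)$, so by the very definition of $r_i^*$ (a finite number for $|\lambda_{b(i)}| < 1$ by Proposition~\ref{bound<1}) we have $|r_i(\Psi(w_j))| \le \tfrac{1}{2} r_i^*$ for every $j$ and every such $i$. Now $r$, and hence each coordinate projection $r_i(\vect{x}) = (P^{-1}\vect{x})_i$, is linear, so for $|\lambda_{b(i)}| < 1$ and $j \in \{1,\ldots,k-1\}$,
\[
|r_i(\vect{d}_j)| \;=\; |r_i(\Psi(w_{j+1})) - r_i(\Psi(w_j))| \;\le\; |r_i(\Psi(w_{j+1}))| + |r_i(\Psi(w_j))| \;\le\; r_i^*,
\]
which gives $t \in \mathcal{R}_B$, as desired.

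I do not expect a genuine obstacle: the content of the corollary has already been supplied by Proposition~\ref{bound<1}, and what remains is the triangle inequality. The only step deserving a line of justification is the closure of $\operatorname{Fact^\infty}(h)$ under factors, which is precisely what lets the bound on Parikh vectors of factors of $h$ apply to the individual blocks $w_1, \ldots, w_k$ of a realization of $t$; after that the estimate above is immediate.
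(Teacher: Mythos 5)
Your argument is correct and is exactly the reasoning the paper leaves implicit for this corollary: each block $w_j$ of a realization lies in $\operatorname{Fact^\infty}(h)$, Proposition~\ref{bound<1} bounds $|r_i(\Psi(w_j))|$ by $\tfrac{1}{2}r_i^*$, and the triangle inequality gives $|r_i(\vect{d}_j)|\le r_i^*$ — the factor $2$ in the definition of $r_i^*$ is there precisely for this step. No gap; this matches the paper's intended approach.
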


We need a tight upper bound on $r_{i}^*$ for the algorithm to be efficient.
The bound from the last proposition could be too loose, but we can reach better bounds by considering the fact that (since $h$ is primitive) for any $l>1$, $h^l$ has the same factors than $h$.
For example, for the abelian-square-free morphism $h_8$ (Section \ref{sec:results:main}) the bound for the eigenvalue $\sim(0.33292,0.67077)$ is $5.9633$, and become $1.4394$ for 
$(h_8)^{20}$, while the observed bound on the prefix of size approximately 1 million of a fixed point of $(h_8)^2$ is $1.4341$.

\bigskip

For any template ${t_0}$, we denote by $\vect{X}_{t_0}$ the set of all the vectors that appear on an ancestor of $t_0$.

\begin{proposition}\label{bound>1}
For every $i$ such that $|\lambda_{b(i)}| >1$, for every template ${t_0}$, $\{ |r_i(\vect{x}) | : \vect{x}\in \vect{X}_{t_0} \}$ is bounded.
\end{proposition}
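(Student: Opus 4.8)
The plan is to mirror the structure of the proof of Proposition~\ref{bound<1}, but to propagate bounds \emph{downwards} along the ancestor relation rather than using the convergent geometric series. The key point is that the defining recurrence for a parent $t'$ of $t$, namely $\vect{d}_i = M_h \vect{d'}_i + \Psi(s_{i+1}p_{i+2}) - \Psi(s_i p_{i+1})$, can be inverted: applying $r$ and restricting to the Jordan block of index $i$ gives $r(\vect{d}_i)_{[i_s,i_e]} = B(i) \, r(\vect{d'}_i)_{[i_s,i_e]} + r(\Psi(s_{i+1}p_{i+2}) - \Psi(s_i p_{i+1}))_{[i_s,i_e]}$, hence $r(\vect{d'}_i)_{[i_s,i_e]} = B(i)^{-1}\bigl(r(\vect{d}_i)_{[i_s,i_e]} - r(\text{correction})_{[i_s,i_e]}\bigr)$, which is legitimate because $\lambda_{b(i)}\neq 0$ (it has absolute value $>1$), so $B(i)$ is invertible. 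Iterating this along a chain of ancestors $t_0 = t_1, t_2, \ldots, t_m$ expresses the coordinate $r_i$ of the vector on $t_m$ as $B(i)^{-m}$ applied to $r_i$ of the vector on $t_0$, minus a sum $\sum_{j=1}^{m} B(i)^{-j}$ applied to Parikh vectors of words from $\operatorname{Suff}(h)\cdot\operatorname{Pref}(h)$ (of bounded size).

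First I would fix $i$ with $|\lambda_{b(i)}|>1$ and a template $t_0$, and write any $\vect{x}\in\vect{X}_{t_0}$ as the $j$-th vector appearing on some ancestor $t_m$ reached by a chain $t_0,\ldots,t_m$. Unrolling the parent recurrence as above yields, on the block $[i_s,i_e]$,
$$ r(\vect{x})_{[i_s,i_e]} = B(i)^{-m} \, r(\vect{d}^{(0)}_j)_{[i_s,i_e]} \;-\; \sum_{\ell=1}^{m} B(i)^{-\ell} \, r\!\left(\Psi(c_\ell)\right)_{[i_s,i_e]}, $$
where each $c_\ell$ is a (signed) concatenation of a suffix and a prefix from $h$, hence has Parikh vector of bounded norm, uniformly in the chain. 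Since $\bigl\|B(i)^{-1}\bigr\|<1$ when $|\lambda_{b(i)}|>1$ (the inverse of a Jordan block with eigenvalue of absolute value $>1$ is again upper- or lower-triangular with spectral radius $|\lambda_{b(i)}|^{-1}<1$, and one can make this explicit exactly as the excerpt does for $\sum (J_n(\lambda))^k$ with $|\lambda|<1$), the series $\sum_{\ell=1}^{\infty} B(i)^{-\ell}$ converges and $B(i)^{-m}\to 0$. Therefore $|r_i(\vect{x})|$ is bounded by $\|r(\vect{d}^{(0)}_j)\|$ (a fixed quantity depending only on $t_0$) plus $\bigl(\sum_{\ell\ge 1}\|B(i)^{-\ell}\|\bigr)\cdot \max\{|r_i(\Psi(sp))| : (s,p)\in(\operatorname{Suff}(h),\operatorname{Pref}(h))\}$, analogously to the explicit $\vect{u}\cdot\vect{v}$ bound of Proposition~\ref{bound<1}. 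Taking the supremum over all $i$ in the block and over $j\in\{1,\ldots,k-1\}$ gives the claimed bound on $\{|r_i(\vect{x})| : \vect{x}\in\vect{X}_{t_0}\}$.

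The main obstacle is bookkeeping the correction terms carefully: the recurrence for the parent involves the \emph{adjacent} pairs $s_{i+1}p_{i+2}$ and $s_i p_{i+1}$, so one must track, along the chain, which suffixes and prefixes $p_i,s_i$ were chosen at each step and check that their contribution to coordinate $r_i$ is uniformly bounded — this follows because $\operatorname{Suff}(h)$ and $\operatorname{Pref}(h)$ are finite sets, so there are finitely many possible correction vectors and we may bound each $|r_i(\Psi(c_\ell))|$ by a single constant. A secondary technical point is that $B(i)$ need not be a single $1\times 1$ block, so one genuinely works with the matrix geometric series $\sum_\ell B(i)^{-\ell}$ and must invoke its convergence (via $\|B(i)^{-\ell}\|\le \mathrm{poly}(\ell)\,|\lambda_{b(i)}|^{-\ell}$, exactly the transpose of the computation the excerpt already records for powers of $J_n(\lambda)$), rather than a scalar geometric series. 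Once this is in place the argument is a direct dualization of Proposition~\ref{bound<1}, and combined with it we conclude (as the corollaries then spell out) that only finitely many integer vectors can occur on realizable ancestors of $t_0$.
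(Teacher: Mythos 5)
Your proposal is correct and follows essentially the same route as the paper: both express the vector $\vect{x}_0$ of $t_0$ in terms of the vector $\vect{x}$ on the ancestor via the unrolled parent recurrence, restrict to the Jordan block, invert $B(i)$ (legitimate since $|\lambda_{b(i)}|>1$), and conclude from the convergence of $\sum_{\ell\ge 1}\|B(i)^{-\ell}\|$ together with the finiteness of $\operatorname{Suff}(h)$ and $\operatorname{Pref}(h)$ that $|r_i(\vect{x})|$ is bounded by a constant depending on $h$, $P$, $J$, $i$ and $t_0$. The only quibble is cosmetic: the first term should be bounded by $\sup_m \|B(i)^{-m}\|\cdot\|r(\vect{d}^{(0)}_j)_{[i_s,i_e]}\|$ rather than by $\|r(\vect{d}^{(0)}_j)\|$ alone (that supremum is finite since $B(i)^{-m}\to 0$), which does not affect the boundedness conclusion.
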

\begin{proof}
The proof is close to the proof of Proposition \ref{bound<1}.
Let $\vect{x}$ be a vector of $\vect{X}_{t_0}$. 
Then there is a vector $\vect{x}_0$ of $t_0$, an integer $l$ and 
for every $j \in \{0,\ldots, l-1\}$, a 4-uplet of words $(s_j,s'_j, p_j, p'_j) \in (\operatorname{Suff}(h),\allowbreak\operatorname{Suff}(h),\allowbreak\operatorname{Pref}(h),\allowbreak\operatorname{Pref}(h))$ such that:
$$\vect{x}_0 = \sum_{j=0}^{l-1} M^j \Psi(s_jp_j) + M^l \vect{x} - \sum_{j=0}^{l-1}  M^j \Psi(s'_jp'_j).$$ 
Thus $$r(\vect{x}_0) = \sum_{j=0}^{l-1} J^j r(\Psi(s_jp_j)-\Psi(s'_jp'_j)) + J^l r(\vect{x}).$$ 

Let $i_s$ (resp., $i_e$) be the starting (resp., ending) index of the block $b(i)$. 
Thus $$B(i)^l\restriction{r(\vect{x})}{i_s,i_e} = \restriction{r(\vect{x}_0)}{i_s,i_e} + \sum_{j=0}^{l-1} B(i)^j\restriction{r(\Psi(s'_jp'_j)-\Psi(s_jp_j))}{i_s,i_e} .$$
Moreover we know that $B(i)$ is invertible so:
$$\restriction{r(\vect{x})}{i_s,i_e} = B(i)^{-l}\restriction{r(\vect{x}_0)}{i_s,i_e} + \sum_{j=0}^{l-1} B(i)^{j-l}(\restriction{ r(\Psi(s'_jp'_j)-\Psi(s_jp_j))}{i_s,i_e} .$$

The only eigenvalue of $B(i)^{-1}$ is $\lambda_{b(i)}^{-1}$ and has absolute value less than $1$, thus $\sum_{j=1}^{\infty} ||B(i)^{-j}||$ converges.
Hence $||\restriction{r(\vect{x})}{i_s,i_e}||$ can be bounded by a constant depending only on $h$, $P$, $J$ and $i$.
Thus there is a constant $r_{i,t_0}^*$ such that for all $\vect{x}\in \vect{X}_{t_0}$ $|r_i(\vect{x})|\leq r_{i,t_0}^*$.
\end{proof}

As we will see on the next paragraph, we do not need to compute a value for the bound $r_{i,t_0}^*$.
Since the columns of $P$ is a basis, Propositions \ref{bound<1} and \ref{bound>1} imply that the norm of any vector of a template from $\mathcal{R}_B\cap \operatorname{Anc}_h(t_0)$ is bounded,
and thus $\mathcal{R}_B\cap \operatorname{Anc}_h(t_0)$ is finite. 
Moreover we know that  $\operatorname{Ranc}_h(t_0)\subseteq \mathcal{R}_B\cap \operatorname{Anc}_h(t_0)$ so we have the following corollary.
\begin{Corollary}\label{finddiscrset}
For any template $t_0$ and any morphism $h$ whose matrix has no eigenvalue of absolute value 1, $\operatorname{Ranc}_h(t_0)$ is finite.
\end{Corollary}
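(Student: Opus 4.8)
The plan is to derive the finiteness of $\operatorname{Ranc}_h(t_0)$ by combining the two boundedness results already established, Propositions~\ref{bound<1} and~\ref{bound>1}, with the elementary fact that the columns of the matrix $P$ coming from a Jordan decomposition $M = PJP^{-1}$ form a basis of $\mathbb{C}^n$. Because of the latter, a set of integer vectors is finite as soon as it is bounded, and it is bounded as soon as each coordinate function $r_i(\cdot) = (P^{-1}\cdot)_i$ is bounded on it. Since $h$ has no eigenvalue of absolute value $1$, every index $i \in \{1,\dots,n\}$ falls into exactly one of the two cases $|\lambda_{b(i)}| < 1$ or $|\lambda_{b(i)}| > 1$, so it is enough to bound the coordinates $r_i$ of the relevant vectors in each of these two cases separately.

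First I would fix an arbitrary template $t = [a_1,\dots,a_{k+1},\vect{d}_1,\dots,\vect{d}_{k-1}] \in \operatorname{Ranc}_h(t_0)$; by definition $t$ is an ancestor of $t_0$ and is realized by some word $w = a_1 w_1 a_2 \cdots w_k a_{k+1} \in \operatorname{Fact^\infty}(h)$. For the expanding coordinates, I would use that each $\vect{d}_j$ appears on an ancestor of $t_0$, hence $\vect{d}_j \in \vect{X}_{t_0}$, so Proposition~\ref{bound>1} bounds $|r_i(\vect{d}_j)|$ for every $i$ with $|\lambda_{b(i)}| > 1$ by a constant depending only on $h$ and $t_0$. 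For the contracting coordinates, I would use that $\vect{d}_j = \Psi(w_{j+1}) - \Psi(w_j)$ with $w_j, w_{j+1} \in \operatorname{Fact^\infty}(h)$, so Proposition~\ref{bound<1} gives $|r_i(\vect{d}_j)| \le |r_i(\Psi(w_{j+1}))| + |r_i(\Psi(w_j))| \le r_i^*$ for every $i$ with $|\lambda_{b(i)}| < 1$ — that is, $t \in \mathcal{R}_B$. Putting the two bounds together, every coordinate of every $\vect{d}_j$ is bounded by a constant that does not depend on $t$, hence $\|\vect{d}_j\|$ is bounded, and since the $\vect{d}_j$ are integer vectors there are only finitely many possibilities for them. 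As each letter $a_i$ ranges over the finite set $\alphabet \cup \{\varepsilon\}$, I conclude that $\mathcal{R}_B \cap \operatorname{Anc}_h(t_0)$ is finite, and since $\operatorname{Ranc}_h(t_0) \subseteq \mathcal{R}_B \cap \operatorname{Anc}_h(t_0)$ this proves the corollary.

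I do not anticipate a genuine obstacle here, since the whole argument is bookkeeping on top of Propositions~\ref{bound<1} and~\ref{bound>1}. The one point that needs care is keeping track of which hypothesis supplies which bound: being an \emph{ancestor} of $t_0$ (through $\vect{X}_{t_0}$ and Proposition~\ref{bound>1}) controls the coordinates along generalized eigenspaces of modulus $> 1$, while being \emph{realizable} (through membership in $\mathcal{R}_B$ and Proposition~\ref{bound<1}) controls those of modulus $< 1$; the no-modulus-$1$ hypothesis is exactly what guarantees that together they cover all $n$ coordinates.
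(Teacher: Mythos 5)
Your proposal is correct and follows essentially the same route as the paper: it combines Propositions~\ref{bound<1} and~\ref{bound>1} with the fact that the columns of $P$ form a basis, so that every vector of a template in $\mathcal{R}_B\cap\operatorname{Anc}_h(t_0)$ has bounded norm, and then uses integrality of the vectors, finiteness of the letter choices, and the inclusion $\operatorname{Ranc}_h(t_0)\subseteq \mathcal{R}_B\cap \operatorname{Anc}_h(t_0)$. The only difference is that you spell out explicitly why a realizable template lies in $\mathcal{R}_B$ (the triangle inequality behind the factor $2$ in $r_i^*$), which the paper states as a separate unnumbered corollary.
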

\paragraph{Computation of the parents and ancestors}
Propositions \ref{bound<1} and \ref{bound>1} give us a naive algorithm to compute a set $S$ of templates such that 
$\operatorname{Ranc}_h(t_0)\subseteq S\subseteq \operatorname{Anc}_h(t_0)$.
We first compute a set of templates $T_{t_0}$ whose vectors' coordinates in basis $P$ are bounded by $r_i^*$ or $r_{i,t_0}^*$, 
then we compute the parent relation inside $T_{t_0}$ and we select the parents that are accessible from $t_0$.
But this method is not efficient at all, since for morphisms whose fixed points avoid abelian powers, the set of ancestors $\mathcal{R}_B\cap \operatorname{Anc}_h(t_0)$ is usually very small relatively to $T_{t_0}$.

It is better to use the following algorithm to compute a super-set of realizable ancestors of $t_0$.
We compute recursively a set of templates $A_{t_0}$ that we initialize at $\{t_0\}$, and each time that we add a new template $t$,
we compute the set of parents of $t$ which are in $\mathcal{R}_B$ and add them to $A_{t_0}$.
At any time we have $A_{t_0}\subseteq \mathcal{R}_B\cap \operatorname{Anc}_h(t_0)$ which is finite so this algorithm terminates.
Moreover if a parent of a template is realizable then this template also is realizable. 
It implies that, at the end, $\operatorname{Ranc}_h(t_0)\subseteq A_{t_0}$.

We need to be able to compute the set of realizable parents of a template.
Let $t=[a_1, \ldots, a_{k+1}, \vect{d}_1, \ldots, \vect{d}_{k-1}]$ be a template, and 
assume that $t' =[a'_1, \ldots, a'_{k+1}, \vect{d'}_1, \ldots, \vect{d'}_{k-1}]$ is a parent of $t$, and $t'$ is realizable by $h$.
Then there are $p_1, s_1, \ldots , p_{k+1},s_{k+1}\in \alphabet^*$ such that:
\begin{itemize}
 \item $\forall i\in\{1,\ldots, k+1\}$, $h(a'_i)= p_ia_is_i$, 
 \item $\forall i\in\{1,\ldots, k-1\}$, $\vect{d}_i = M \vect{d'}_i+ \Psi(s_{i+1}p_{i+2}) - \Psi(s_ip_{i+1})$.
\end{itemize}
There are finitely many ways of choosing the $a'_i$ in $t'$ and finitely many ways of choosing the $s_i$ and the $p_i$,
so we only need to be able to compute the possible values of the $\vect{d'}_i$ of a template with fixed $a'_1, \ldots, a'_{k+1}$ and $s_1, p_1, \ldots , s_{k+1},p_{k+1}$.
(Note that this is easy if $M$ is invertible.)

Suppose we want to compute $\vect{d'}_m$ for some $m$.
That is, we want to compute all the integer solutions $\vect{x}$ of $M\vect{x}=\vect{v}$, where $\vect{v}=\vect{d}_m - \Psi(s_{m+1}p_{m+2}) + \Psi(s_m p_{m+1})$. 
If this equation has no integer solution, then the template $t$ has no parents with this choice of $a_i$, $p_i$ and $s_i$.
Otherwise, we use the Smith decomposition of $M$ to find a solution $\vect{x_0}$ and a basis $(\beta_1, ..., \beta_{\kappa})$ (where $\kappa= \dim \Ker{M}$) of the lattice $\Lambda = \Ker{M} \cap \mathbb{Z}^n$.
We are only interested in parents realizable by $h$, so we want to compute the set $\vect{X}=\{\vect{x}\in \vect{x_0} + \Lambda :\ \forall i \text{ s.t. } \vert \lambda_{b(i)} \vert < 1,\ |r_i(\vect{x})| \le r_i^*\}$. Since $\Lambda$ is included in the generalized eigenspace of the eigenvalue $0$, we know by Proposition \ref{bound<1} that $\vect{X}$ is finite.
Let $\mathcal{B}$ be the matrix whose columns are the elements of the basis $(\beta_1, ..., \beta_{\kappa})$, and let $\vect{X}_{\mathcal{B}}=\{\vect{x} \in \mathbb{Z}^\kappa:\ \vect{x_0} + \mathcal{B} \vect{x}\in \vect{X} \} $.
$\Ker M$ is generated by $\mathcal{B}$ but also by the generalized eigenvectors corresponding to a null eigenvalue which are columns of $P$. 
So there is a matrix $Q$ made of rows of $P^{-1}$ such that $Q \mathcal{B}$ is invertible.
All the rows of $Q$ are rows of $P^{-1}$ thus from Proposition \ref{bound<1} there are $c_1,\ldots, c_{\kappa}\in \mathbb{R}$ such that for any $\vect{x}\in \vect{X}_{\mathcal{B}}$ and $ i\in\{1,\ldots , \kappa\}$,
 $|(Q(\mathcal{B} \vect{x}+ \vect{x_0}))_i| \le c_i$ thus $|(Q \mathcal{B} \vect{x})_i| \le c_i+|(Q\vect{x_0})_i|$. 
Then: $$||Q \mathcal{B} \vect{x}||^2\leq \sum_{i=1}^{\kappa} (c_i + |Q \vect{x_0}|)^2 = c.$$
From Proposition \ref{WKT} if $\mu_{min}$ is the smallest eigenvalue of $(Q \mathcal{B})^* (Q \mathcal{B})$ then 
$\mu_{min}||\vect{x}||^2\leq||Q \mathcal{B} \vect{x}||^2\leq c$.
Moreover $Q \mathcal{B}$ is invertible, thus $\mu_{min} \not=0$, and $\vect{X}_{\mathcal{B}}$ contains only integer points in the ball of radius 
$\sqrt{\frac{c}{\mu_{min}}}$. 
We can easily compute a finite super-set of $\vect{X}_{\mathcal{B}}$, and thus of $\vect{X}$, and then we can select the elements that are actually in $\vect{X}$.
The choice of $\vect{x_0}$ is significant for the sharpness of the bound $c$: it is preferable to take a $\vect{x_0}$ nearly orthogonal to $\Ker{M}$.

\paragraph{Comparing to the factors}
Let $t=[a_1,\ldots, a_{k+1}, \vect{d}_1,\ldots, \vect{d}_{k-1}]$ be a $k$-template. 
% Then we denote by $\Delta(t)= \max_{i=1}^{k-1} ||\vect{d}_i||_1$ the maximal norm 1 of a vector in $t$.
% Let $\delta = \max_{a\in \alphabet} |h(a)|$ be the length of the longest image of a letter by $h$.
Let $\Delta(t)= \max_{i=1}^{k-1} ||\vect{d}_i||_1$ and $\delta = \max_{a\in \alphabet} |h(a)|$.

\begin{proposition}\label{hasancestors}
Let $t$ be a $k$-template and $w\in\operatorname{Fact^\infty}(h)$ a word which realizes $t$.
If $\vert w \vert >  k\left(\frac{(k-1)\Delta(t)}{2} +\delta+1\right)+1$ then for every $w'\in\operatorname{Fact^\infty}(h)$ such that $w\in \operatorname{Fact}(h(w'))$ there is a parent $t'$ of $t$ such that $w'$ realizes $t'$.
\end{proposition}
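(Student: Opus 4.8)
The plan is to take the realization $w = a_1 w_1 a_2 w_2 \cdots w_k a_{k+1}$ of $t$ inside $h(w')$, and to "pull back" each cut point of this factorization to a cut point in $w'$. Write $w'$ as a word over $\alphabet$; since $w$ is a factor of $h(w')$, we can write $w' = b_0 b_1 \cdots b_m b_{m+1}$ where $h(b_1 \cdots b_m)$ is the minimal "central block" of $h(w')$ entirely contained in the span of $w$, with $w$ starting somewhere inside $h(b_0)$ (or at its right end) and ending somewhere inside $h(b_{m+1})$. For each occurrence marker $a_i$ in $w$, its position falls inside the image $h(b_{j_i})$ of a unique letter of $w'$; set $a'_i = b_{j_i}$, and let $p_i$ (resp. $s_i$) be the prefix (resp. suffix) of $h(a'_i)$ lying strictly before (resp. after) that occurrence of $a_i$, so that $h(a'_i) = p_i a_i s_i$. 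The candidate parent is $t' = [a'_1,\ldots,a'_{k+1},\vect{d'}_1,\ldots,\vect{d'}_{k-1}]$ where the $\vect{d'}_i$ are forced by the parent relation: $\vect{d'}_i = M^{-1}(\vect{d}_i - \Psi(s_{i+1}p_{i+2}) + \Psi(s_i p_{i+1}))$ when this is an integer vector — but since we are defining $t'$ as what $w'$ actually realizes, we instead read off $\vect{d'}_i = \Psi(w'_{i+1}) - \Psi(w'_i)$ directly from the induced factorization of $w'$, and then check that this $t'$ is indeed a parent of $t$ by verifying the second bullet of the parent definition.

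**The key steps, in order.** First, I would carefully set up the induced factorization $w' = a'_1 w'_1 a'_2 \cdots w'_k a'_{k+1}$: the blocks $w'_i$ of $w'$ are the maximal runs of letters of $w'$ whose $h$-images lie between (the chosen occurrences of) $a'_i$ and $a'_{i+1}$. The crucial point making this well-defined is that the cut points are ordered correctly in $w'$, i.e. $j_1 \le j_2 \le \cdots \le j_{k+1}$, which follows from the fact that the occurrences of $a_1,\ldots,a_{k+1}$ appear in this order in $w$ and $h$ preserves order of positions. Here the length hypothesis enters: if $w$ is long enough, then between consecutive $a_i$ there is enough room that the corresponding images $h(a'_i)$ and $h(a'_{i+1})$ are genuinely different letters' images (or at least that the $w'_i$ are not ill-defined) — more precisely, I expect the length bound $|w| > k\left(\frac{(k-1)\Delta(t)}{2} + \delta + 1\right) + 1$ is exactly what guarantees each $|w_i|$ is large enough (the $|w_i|$ differ from each other by at most $\sum \|\vect{d}_j\|_1 \le (k-1)\Delta(t)$ in $\ell_1$, hence by a pigeonhole each $|w_i| \ge \delta + 2$ or so) that the portion of $w'$ mapping into $w_i$ is a nonempty honest factor of $w'$, so that $p_{i+1}$ and $s_i$ are genuine proper prefixes/suffixes and the decomposition $h(a'_i) = p_i a_i s_i$ with $h(w'_i)$ sitting between them makes sense.

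**Verifying $t'$ is a parent.** Once the factorization of $w'$ is in place, translate the identity $h(w') \supseteq w$ into Parikh vectors: $\Psi(w_i)$ equals $\Psi(s_i) + M\Psi(w'_i) + \Psi(p_{i+1})$ by reading off the decomposition $w_i = s_i\, h(w'_i)\, p_{i+1}$ (using $\Psi \circ h = M \circ \Psi$). Subtracting the equations for $i+1$ and $i$ gives
$$\vect{d}_i = \Psi(w_{i+1}) - \Psi(w_i) = M(\Psi(w'_{i+1}) - \Psi(w'_i)) + \Psi(s_{i+1}p_{i+2}) - \Psi(s_i p_{i+1}) = M\vect{d'}_i + \Psi(s_{i+1}p_{i+2}) - \Psi(s_i p_{i+1}),$$
which is precisely the second condition for $t'$ to be a parent of $t$, the first condition $h(a'_i) = p_i a_i s_i$ holding by construction. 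And $w' \in \operatorname{Fact^\infty}(h)$ realizes $t'$ since $\vect{d'}_i$ was defined as $\Psi(w'_{i+1}) - \Psi(w'_i)$.

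**Main obstacle.** The genuinely delicate part is not the Parikh bookkeeping but making the pull-back of cut points rigorous in the edge cases: when two consecutive markers $a_i, a_{i+1}$ land in the image of the same letter of $w'$ (then $w'_i = \varepsilon$ and one must split $h(a'_i)=h(a'_{i+1})$ into $p_i a_i s_i$ with $a_{i+1}$ also somewhere in $s_i$, which breaks the clean picture), or when $a_i$ is the empty word, or when the occurrence of $a_i$ straddles the boundary between two $h$-images. The length hypothesis is designed to rule out the first (and worst) of these: I would show that if $|w|$ exceeds the stated bound then every $|w_i|$ is at least $\delta+1$ (or $\delta+2$), which forces at least one full letter-image of $w'$ strictly inside each $w_i$, hence $w'_i \ne \varepsilon$ and all markers land in distinct, correctly-ordered letters of $w'$. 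Pinning down this counting argument — translating "$|w|$ large" through "the $|w_i|$ are within $\tfrac{(k-1)\Delta(t)}{2}$ of the average $\tfrac{|w|-\text{(marker lengths)}}{k}$" into "each $|w_i| > \delta$" — is where I expect to spend the real effort.
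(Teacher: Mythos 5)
Your proposal follows essentially the same route as the paper's proof: use the fact that consecutive blocks' Parikh vectors differ by the $\vect{d}_i$ to bound the variation of the $|w_i|$, deduce from the length hypothesis that every $|w_i| > \delta$, then pull the factorization back through $h$ to obtain $w' = a'_1 w'_1 \cdots a'_k w'_k a'_{k+1}$ with $w_i = s_i h(w'_i) p_{i+1}$ and read off the parent template $t'$ with $\vect{d'}_i = \Psi(w'_{i+1}) - \Psi(w'_i)$, verified by exactly the Parikh identity you state. The counting step you flag as remaining work is precisely the paper's short computation ($|w_j| \le |i-j|\Delta(t) + |w_i|$, hence $|w| \le \tfrac{k(k-1)}{2}\Delta(t) + k|w_i| + k + 1$, forcing $|w_i| > \delta$), so your plan is correct and matches the published argument.
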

\begin{proof}
Let $t= [a_1, \ldots, a_{k+1}, \vect{d}_1, \ldots, \vect{d}_{k-1}]$ be a $k$-template and $w\in\operatorname{Fact^\infty}(h)$ a word which realizes $t$ such that
$\vert w \vert >  k\left(\frac{(k-1)\Delta(t)}{2} +\delta+1\right)+1$.
Then there are $w_1, \ldots , w_n\in \alphabet^*$ such that $w =  a_1 w_1 a_2 w_2 \ldots w_k a_{k+1}$
and $\forall i\in\{1, \ldots,k-1 \}, \ \Psi(w_{i+1})-\Psi(w_i) = \vect{d}_i$.
Thus for any $i,j\in \{1,\ldots, k\}$ such that $j<i$, $\Psi(w_i)= \Psi(w_j)+\sum_{m=j}^{i-1}\vect{d}_m$ and, by triangular inequality, we have:
$$  \left|w_i\right|-\left|w_j\right|  = ||\Psi(w_i)||_1-||\Psi(w_j)||_1\leq \sum_{m=j}^{i-1}||\vect{d}_m||_1\leq (i-j)\Delta (t). $$
Therefore for any $i,j\in \{1,\ldots, k\}$, $  \left|w_j\right| \leq |i-j|\Delta (t) +\left|w_i\right|$,
and for any $i$,  $|w|\le \sum_{m=1}^{k}(|i-m|\Delta (t) +|w_i|) + k+1$. 
Thus $|w|\le \frac{k(k-1)}{2}\Delta(t) +k|w_i|+ k+1$.
Then $k\left(\frac{(k-1)\Delta(t)}{2} +|w_i|+1\right)+1\geq |w|> k\left(\frac{(k-1)\Delta(t)}{2} +\delta+1\right)+1$,
and consequently $\forall i,\ |w_i|> \delta = \max_{a\in \alphabet} |h(a)|$.
We also know that $\forall i$, $w_i \in \operatorname{Fact^\infty}(h)$ so there are $w'_1,\ldots, w'_k\in \alphabet^*$, 
$a'_1,\ldots, a'_{k+1}\in \alphabet$, $p_1,\ldots,p_{k+1}\in \operatorname{Pref}(h)$ and $s_1,\ldots, s_{k+1}\in \operatorname{Suff}(h)$ such that: 
\begin{itemize}
 \item $\forall i$, $w_i = s_i h(w'_i)p_{i+1}$,
 \item $\forall i$, $h(a'_i)= p_ia_is_i$,
 \item  $w'= a'_1w'_1a'_2\ldots a'_{k}w'_ka'_{k+1}$.
\end{itemize}
$w'$ realizes the template $t'= [a'_1, \ldots, a'_{k+1}, \Psi(w'_2)-\Psi(w'_1), \ldots, \Psi(w'_k)-\Psi(w'_{k-1})]$ which is a parent of $t$.
\end{proof}

\begin{proposition}\label{factorvsdiscrset}
Let $S$ be such that $\operatorname{Ranc}_h(t_0)\subseteq S \subseteq \operatorname{Anc}_h(t_0)$, and let $s=\max_{t\in S}  k\left(\frac{(k-1)\Delta(t)}{2} +\delta+1\right)+1$. Then the following are equivalent:
\begin{enumerate}
 \item there is a factor of $\operatorname{Fact^\infty}(h)$ of size at most $s$ realizing a template $t$ of $S$,
 \item there is a factor of $\operatorname{Fact^\infty}(h)$ realizing $t_0$.
\end{enumerate}
\end{proposition}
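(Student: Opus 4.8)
The plan is to prove the two implications separately, the direction $(1)\Rightarrow(2)$ being essentially immediate and the direction $(2)\Rightarrow(1)$ being the substantive one. For $(1)\Rightarrow(2)$: suppose some factor $w$ of $\operatorname{Fact^\infty}(h)$ realizes a template $t\in S\subseteq \operatorname{Anc}_h(t_0)$. By definition of ancestor there is a chain $t_0 = t_1, t_2,\ldots, t_m = t$ in which each $t_{j+1}$ is a parent by $h$ of $t_j$. The Remark made just after the definition of parents states that if $t'\in\operatorname{Par}_h(t)$ is realizable by $h$ then so is $t$; applying this up the chain, realizability of $t$ propagates to realizability of $t_{m-1}$, then $t_{m-2}$, \ldots, and finally $t_0$. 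Hence some factor of $\operatorname{Fact^\infty}(h)$ realizes $t_0$, which is $(2)$.

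For $(2)\Rightarrow(1)$, the idea is an induction on the length of a realizing word, using Proposition~\ref{hasancestors} to shorten long realizations. Suppose $(2)$ holds; among all words in $\operatorname{Fact^\infty}(h)$ realizing some template of $S$, pick one, say $w$ realizing $t\in S$, of minimal length. I claim $|w|\le s$. Suppose not, i.e. $|w| > k\left(\frac{(k-1)\Delta(t)}{2}+\delta+1\right)+1$ (this holds since $s$ is the maximum of these quantities over $S$ and $t\in S$). Because $w\in\operatorname{Fact^\infty}(h)$, there is some $w'\in\operatorname{Fact^\infty}(h)$ with $w\in\operatorname{Fact}(h(w'))$; indeed every element of $\operatorname{Fact^\infty}(h)=\bigcup_i\operatorname{Fact}(h^i)$ lies in $\operatorname{Fact}(h^i)$ for some $i\ge 1$, so it is a factor of $h(u)$ for some $u\in\operatorname{Fact}(h^{i-1})\subseteq\operatorname{Fact^\infty}(h)$ (and the case $i=1$, where $w$ is a factor of $h(a)$ for a single letter $a\in\operatorname{Fact^\infty}(h)$ by primitivity, is covered too). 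Proposition~\ref{hasancestors} then yields a parent $t'$ of $t$ that is realized by $w'$. Since $t\in S\subseteq\operatorname{Anc}_h(t_0)$ and $t'$ is a parent of $t$, $t'\in\operatorname{Anc}_h(t_0)$ as well; moreover $t'$ is realized by the word $w'\in\operatorname{Fact^\infty}(h)$, so $t'\in\operatorname{Ranc}_h(t_0)\subseteq S$. Thus $w'$ realizes a template of $S$, and $|w'| < |w|$ (as $|h(w')|\ge|w|$ and $h$ is primitive on an alphabet of size $\ge 2$, so $|h(b)|\ge 2$ for... — more carefully, $|w|\le|h(w')|\le \delta|w'|$, and combined with $|w|>k(\delta+1)\ge \delta+1 > \delta$ one gets $|w'|\ge 2$ hence $|w'|<|w|$ unless $|w'|$... ), contradicting minimality of $w$. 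Therefore $|w|\le s$, giving $(1)$.

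The main obstacle I anticipate is making the strict inequality $|w'| < |w|$ fully rigorous, which is what drives the induction. One has $|w|\le |h(w')| \le \delta |w'|$, so $|w'| \ge |w|/\delta$; this is only useful once we know $|w|$ is large, which we do from the length hypothesis $|w| > k(\delta+1)$. A clean way is: if $|w'| \le 1$ then $|h(w')|\le\delta < |w|$, contradicting $w\in\operatorname{Fact}(h(w'))$; so $|w'|\ge 2$, and then $|h(w')|$ strictly exceeds $|w'|$ only if some image $h(b)$ has length $\ge 2$, which holds because $h$ has an eigenvalue of absolute value $>1$ (so $h$ is non-trivially expanding and not all images can be single letters). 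Alternatively, and perhaps most simply, one observes that among words realizing templates of $S$ one may instead take $w'$ directly and iterate: any infinite descent is impossible since lengths are positive integers, so the process of replacing $w$ by $w'$ must terminate at a word of length $\le s$. I would phrase the argument in that last form to sidestep the delicate bookkeeping: start from any realizing word, repeatedly apply Proposition~\ref{hasancestors} as long as the length exceeds $s$, note each step lands on a template still in $S$ (by the $\operatorname{Ranc}\subseteq S$ argument above) and strictly decreases the length, and conclude by well-ordering of $\mathbb{N}$.
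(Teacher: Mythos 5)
Your direction $(1)\Rightarrow(2)$ is fine and matches the paper. The gap is in $(2)\Rightarrow(1)$, at the exact point you flag: nothing guarantees $|w'|<|w|$, and the fallback arguments you sketch do not repair it. Proposition~\ref{hasancestors} lets you use \emph{any} $w'\in\operatorname{Fact}^\infty(h)$ with $w\in\operatorname{Fact}(h(w'))$, but the $w'$ your construction actually produces is $h^{i-1}(a)$, which is typically exponentially \emph{longer} than $w$; and the inequalities you write, $|w|\le|h(w')|\le\delta|w'|$, only bound $|w'|$ from below, never from above. Even if you trim $w'$ to a minimal factor $b_1\cdots b_m$ with $w\in\operatorname{Fact}(h(b_1\cdots b_m))$, you only get $|w|\ge 2+\sum_{j=2}^{m-1}|h(b_j)|\ge m=|w'|$, i.e.\ a \emph{non-strict} inequality, and equality really can occur because the setting deliberately allows letters with single-letter images (e.g.\ $h_8$). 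Your observation that some $h(b)$ has length $\ge 2$ compares $|h(w')|$ with $|w'|$, which again says nothing about $|w'|$ versus $|w|$, since $w$ is only a factor of $h(w')$. So the ``strictly decreases the length'' claim in your final formulation is unjustified, and without it the well-ordering/infinite-descent conclusion does not follow: the lengths could stall above $s$ (or even grow), and your minimal-counterexample contradiction never materializes. A secondary, smaller point: you should note that under $(2)$ the template $t_0$ itself lies in $\operatorname{Ranc}_h(t_0)\subseteq S$, so the set of words realizing a template of $S$ is nonempty to begin with.

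The paper's proof avoids any monotonicity claim altogether: since $w\in\operatorname{Fact}^\infty(h)$, there are $l$ and a letter $a$ with $w\in\operatorname{Fact}(h^l(a))$, hence a chain $w_0=a,\ w_1,\ldots,w_l=w$ with $w_{j+1}\in\operatorname{Fact}(h(w_j))$. Because $|w_0|=1\le s<|w_l|$, there is an index $i$ with $|w_i|\le s$ and $|w_j|>s$ for all $j>i$; applying Proposition~\ref{hasancestors} downward from $j=l$ to $j=i+1$ (each $w_j$ realizes a template in $\operatorname{Ranc}_h(t_0)\subseteq S$, so the length threshold $s$ applies) lands on $w_i$, a realizing word of length at most $s$. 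The crossing-index trick, anchored at a single letter, is what replaces the decreasing-length argument you would need; you should restructure your proof along these lines rather than trying to force a descent.
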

\begin{proof}
Let $w$ be a factor of $\operatorname{Fact^\infty}(h)$ such that the template $t_0$ is realized by $w$ and $|w|>s$.
By the definition of $\operatorname{Fact^\infty}(h)$, there are $l\in \mathbb{N}$ and $a\in \alphabet$ such that $w\in\operatorname{Fact}(h^l(a))$. 
Thus there is a sequence $w_0, w_1,\ldots, w_{l-1},w_l$ such that $\forall i, w_{i+1} \in \operatorname{Fact}(h(w_i))$, $w_0=a$ and $w_l=w$.
Then there is $i\in\{0,\ldots,l-1\}$ such that $|w_i|\le s$ and $\forall j>i$, $|w_j|> s$.
Consequently, from Proposition \ref{hasancestors}, $w_i$ realizes a template in $S$.

On the other hand, if a factor in $w'\in\operatorname{Fact^\infty}(h)$ realizes a template in $S$, then by the definition of the ancestors, there is a factor $w \in \operatorname{Fact^\infty}(h)$ realizing $t_0$.
\end{proof}

Since $h$ is primitive one can easily compute the factors of size $n+1$ from the set of factors of size $n$, 
and the set of factors of size 1 is $\alphabet$. 

We summarize the proof of Theorem \ref{decidingtemplates}.
We know that one can compute a set $S$ such that $\operatorname{Ranc}_h(t_0)\subseteq S \subseteq \operatorname{Anc}_h(t_0)$.
Moreover from Proposition \ref{factorvsdiscrset} we know that there is a $s$ such that the two following are equivalent:
\begin{enumerate}
 \item there is a factor of $\operatorname{Fact^\infty}(h)$ of size at most $s$ realizing a template $t$ of $S$,
 \item there is a factor of $\operatorname{Fact^\infty}(h)$ realizing $t_0$.
\end{enumerate}
The condition $1$ can be checked by a computer by generating all the factors of size less than $s$ and comparing them to all the element of $S$.
Hence one can decide if there is a factor of $\operatorname{Fact^\infty}(h)$ that realizes $t_0$.

\bigskip
In Section \ref{sec:results:main}, we present two new morphisms whose fixed points are abelian-square-free.
%As explained in the introduction, it seems necessary
%It is necessary to be able to handle eigenvalues of absolute value less than $1$ in order to show the results of Section \ref{sec:results}. 
%We were able to do that, but we don't know how to handle eigenvalues of absolute value exactly $1$.
%
% The algorithm we presented here does not work if an eigenvalue has absolute value exactly $1$.
It would be interesting for the sake of completeness to be able to decide the abelian-$k$-th-power freeness for any morphism.
We can get ride of the primitivity condition with a lot of technicalities, but it seems much harder to deal with eigenvalues of absolute value exactly $1$
%even if we do not have any interesting example of morphism with that property.
\begin{Problem}
  Is is decidable for any morphism $h$ if the fixed points of $h$ are abelian-$k$-th-power-free.
  %Is is decidable for any total morphism $h$ if the fixed points of $h$ are abelian-$k$-th-power-free.
\end{Problem}

\section{Applications}\label{sec:app}

If a morphism $h$ has $k$ eigenvalues of absolute value less than $1$ (counting their algebraic multiplicities), then Proposition~\ref{bound<1} tells us that the Parikh vectors of the factors of $\operatorname{Fact}^\infty(h)$ are close to the subspace $E_e(M_h)$ of dimension $n-k$.
This can be useful to avoid patterns in images of $\operatorname{Fact^\infty}(h)$.

If one tries to avoid a template $t$ in a morphic word $g(h^\infty)$, with $g: \alphabet \to \alphabet'$ and $\vert \alphabet' \vert < \vert \alphabet \vert$, then the set of parents of $t$ is generally infinite: the set of the vectors in the parents is close to the subspace $\Ker{M_g}$ of dimension $\vert \alphabet \vert - \vert \alphabet' \vert$ (if $M_g$ has full rank).
But if the intersection of $\Ker{M_g}$ with $E_e(M_h)$ is of dimension $0$ then we can generate a finite super-set of the realizable parents, and decide with the algorithm from Section \ref{sec:main}. 

We can use the same idea to avoid additive powers.
This is a generalization of the method used in~\cite{avoid3consblock} to show that we can avoid additive cubes in a word over $\{0,1,3,4\}$.

We present here two applications of this method: decide if a morphic word does not contain large abelian powers
and decide if a pure morphic word avoids additive powers.
Other possible applications, such as deciding if a morphic word avoids $k$-abelian powers, are not explained here, but the method can be easily generalized.

\subsection{Deciding if a morphic word contains large abelian power}
In this subsection we take a second morphism $g$ and we want to decide whether the morphic word $g(h^\infty(a))$ avoids large abelian $k$-th powers.

\begin{proposition}\label{prop:largeperiodset}
If $M_h$ has no eigenvalue of absolute value $1$ and $E_e(M_h) \cap \Ker{M_g} = \nullvectset$, then for any template $t'$ one can compute a finite set $S$ that contains any template realizable by $h$ and parent of $t'$ by $g$.
\end{proposition}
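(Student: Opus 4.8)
The plan is to mimic the proof strategy from Section~\ref{sec:main}, but now the "contracting directions" that keep the relevant vectors bounded come from two sources: the contracting eigenspace $E_c(M_h)$ of $h$ (as in Proposition~\ref{bound<1}), and the kernel $\Ker{M_g}$ of $g$, which is where the vectors appearing in a parent by $g$ must live (modulo a fixed particular solution). First I would unfold the definition: if $t' = [a'_1,\ldots,a'_{k+1},\vect{d'}_1,\ldots,\vect{d'}_{k-1}]$ is a parent by $g$ of $t=[a_1,\ldots,a_{k+1},\vect{d}_1,\ldots,\vect{d}_{k-1}]$, then for each $m$ there are $p_i,s_i\in\alphabet'^{*}$ with $g(a'_i)=p_ia_is_i$ and $\vect{d}_m = M_g\vect{d'}_m + \Psi(s_{m+1}p_{m+2}) - \Psi(s_mp_{m+1})$. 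As in the "Computation of the parents" paragraph, there are finitely many choices of the $a'_i$, $p_i$, $s_i$, so it suffices, for each such choice and each $m$, to bound the integer solutions $\vect{x}=\vect{d'}_m$ of $M_g\vect{x}=\vect{v}_m$ where $\vect{v}_m=\vect{d}_m-\Psi(s_{m+1}p_{m+2})+\Psi(s_mp_{m+1})$. Writing $\vect{x}\in\vect{x_0}+\Lambda$ with $\Lambda=\Ker{M_g}\cap\mathbb{Z}^n$, the point is that any \emph{realizable} such template has all its vectors of bounded norm, hence $\vect{x}$ ranges over finitely many values.

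The boundedness argument has two halves. First, by Proposition~\ref{bound<1} applied to $h$, for every index $i$ with $|\lambda_{b(i)}|<1$ the coordinate $r_i(\vect{x})$ is bounded by the constant $r_i^{*}$ for any vector $\vect{x}$ appearing on a template realizable by $h$; this pins down the component of $\vect{x}$ in $E_c(M_h)$. Second, I claim the component in $E_e(M_h)$ is also bounded. Indeed, $\vect{x}-\vect{x_0}\in\Ker{M_g}$, and by hypothesis $E_e(M_h)\cap\Ker{M_g}=\nullvectset$, so projection onto $E_e(M_h)$ restricted to $\Ker{M_g}$ is injective; combined with the fact that $\vect{x_0}$ is fixed and the $E_c$-component is already bounded, a projection/linear-algebra argument of the same flavour as the end of the "Computation of the parents" paragraph (choose a matrix $Q$ of rows of $P^{-1}$ so that $Q\mathcal{B}$ is invertible, bound $\|Q\mathcal{B}\vect{x}\|$ and invoke Proposition~\ref{WKT}) shows the $E_e$-component of $\vect{x}$ lies in a ball of computable radius. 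So $\vect{x}$ lies in an effectively computable finite set. Taking the union over the finitely many choices of $a'_i,p_i,s_i$ and over $m$ gives the finite set $S$ of candidate parents by $g$ that could be realizable by $h$, and this set clearly contains every template both realizable by $h$ and a parent by $g$ of $t'$.

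The main obstacle is the case where $M_g$ does \emph{not} have full rank, or where $\Ker{M_g}$ is not entirely inside the contracting directions in some awkward way — but the hypothesis $E_e(M_h)\cap\Ker{M_g}=\nullvectset$ is precisely what rules this out, forcing $\Ker{M_g}\subseteq E_c(M_h)\oplus(\text{generalized eigenspace of }0)$, on which the iterates of $M_h$ stay bounded. The genuinely delicate bookkeeping step is producing a \emph{good} particular solution $\vect{x_0}$ (nearly orthogonal to $\Ker{M_g}$, exactly as remarked after the parent computation in Section~\ref{sec:main}) so that the radius $\sqrt{c/\mu_{min}}$ is finite and the resulting enumeration is actually feasible; this is routine linear algebra via the Smith decomposition (Proposition~\ref{Smith}) together with the norm estimate of Proposition~\ref{WKT}, so I would state it and defer the arithmetic. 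Once $S$ is in hand, one selects from it the templates that are genuinely realizable using the decision procedure of Section~\ref{sec:main}, which is where this proposition feeds into the applications.
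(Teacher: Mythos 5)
Your overall skeleton is the paper's: enumerate the finitely many choices of the letters and of the prefixes/suffixes, solve $M_g\vect{x}=\vect{v}$ over the integers via the Smith decomposition, and cut the coset $\vect{x_0}+\Lambda$ down to a finite computable set by combining the bounds of Proposition \ref{bound<1} with an invertibility argument and Proposition \ref{WKT}. (Your swap of the names $t$ and $t'$ relative to the statement is harmless.) However, the one step where the hypothesis $E_e(M_h)\cap\Ker{M_g}=\nullvectset$ actually enters is misstated, and as written it would fail. You claim that the hypothesis makes the projection onto $E_e(M_h)$ restricted to $\Ker{M_g}$ injective, and later that it forces $\Ker{M_g}\subseteq E_c(M_h)\oplus(\text{generalized }0\text{-eigenspace})$. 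Neither follows: trivial intersection with $E_e(M_h)$ gives injectivity, on $\Ker{M_g}$, of the \emph{complementary} projection (onto the coordinates $r_i$ with $\vert\lambda_{b(i)}\vert<1$), not of the projection onto $E_e(M_h)$; and a subspace can meet $E_e(M_h)$ trivially without being contained in the non-expanding space. Indeed the hypothesis fully allows $\Ker{M_g}$ to coincide with $E_c(M_h)$ (morally the situation for $g_3$ and $h_6$, where both have dimension $3$), in which case the projection onto $E_e(M_h)$ restricted to $\Ker{M_g}$ is identically zero --- the opposite of injective. Your preliminary sentence ``any realizable such template has all its vectors of bounded norm'' is also false on its own: Proposition \ref{bound<1} only bounds the contracting coordinates, the expanding coordinates of realizable vectors are unbounded, and finiteness comes only from combining these bounds with the constraint $\vect{x}\in\vect{x_0}+\Lambda$.

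The correct use of the hypothesis, which is the paper's, is the following. Let $B$ be the matrix whose columns form a basis of $\Lambda$, and let $Q$ be the submatrix of $P^{-1}$ made of the rows $i$ with $\vert\lambda_{b(i)}\vert<1$; these rows annihilate $E_e(M_h)$. For $\vect{x}\ne\nullvect$ one has $B\vect{x}\in\Ker{M_g}\setminus\nullvectset$, hence $B\vect{x}\notin E_e(M_h)$ by hypothesis, hence $QB\vect{x}\ne\nullvect$; so $QB$ has rank $\kappa=\dim\Ker{M_g}$ and one extracts a square invertible $Q'B$. Because the rows of $Q'$ are \emph{contracting} rows, Proposition \ref{bound<1} bounds $\vert(Q'\vect{x})_i\vert$ for every vector of a template realizable by $h$, and then Proposition \ref{WKT} bounds the lattice coordinates, giving the finite set. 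The point you are missing is that $Q$ cannot be an arbitrary choice of rows of $P^{-1}$ making $QB$ invertible: it must consist of the contracting rows, since those are the only coordinates Proposition \ref{bound<1} controls, and the hypothesis is exactly the statement that these rows already separate the points of $\Ker{M_g}$.
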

\begin{proof}
The proof is similar to the computation of parents in Section~\ref{sec:main}.
Let $M_h=PJP^{-1}$ be a Jordan decomposition of $M_h$.
Let $\kappa =\dim \Ker{M_g}$ and $\Lambda = \Ker{M_g} \cap \mathbb{Z}^\kappa$.
We use the Smith decomposition of $M_g$ to get the matrix $B$, whose columns form a basis of $\Lambda$.
Assume $t =[a_1, \ldots, a_{k+1}, \vect{d}_1, \ldots, \vect{d}_{k-1}]$ is realizable by $h$ and parent of $t'=[a'_1, \ldots, a'_{k+1},\allowbreak \vect{d'}_1, \ldots, \vect{d'}_{k-1}]$ by $g$.
Then there are $p_1, s_1, \ldots , p_{k+1},s_{k+1}\in \alphabet^*$ such that:
\begin{itemize}
 \item $\forall i$, $g(a_i)= p_ia'_is_i$ 
 \item $\forall i$, $\vect{d'}_i = M_g \vect{d}_i+ \Psi(s_{i+1}p_{i+2}) - \Psi(s_ip_{i+1})$.
\end{itemize}
There are finitely many choices for the $a_i$, $s_i$ and $p_i$.
For a fixed choice of $a_i$, $s_i$ and $p_i$, we want to know all the possible values for $\vect{d}_m$ for some $m$ with fixed $a_1, \ldots, a_{k+1}$ and $p_1, s_1, \ldots , p_{k+1},s_{k+1}$. 
Then $\vect{d}_m$ is an integer solution of $M_g\vect{x}=\vect{v}$, with $\vect{v}=\vect{d'}_m+\Psi(s_mp_{m+1})-\Psi(s_{m+1}p_{m+2})$.
We will see that we have only finitely many choices for $\vect{d}_m$.
As already explained, if such a solution exists, then $\vect{d}_m\in \vect{x_0} + \Lambda$, and $\vect{x_0}$ can be found with the Smith decomposition of $M_g$.

Let $Q$ be the rectangular submatrix of $P^{-1}$ such that the $i$th line of $P^{-1}$ is a line of $Q$ if and only if $|\lambda_{b(i)}|<1$.
For every $\vect{x}\in \mathbb{C}^\kappa\setminus \nullvectset$, $B\vect{x} \in \Ker{M_g}$. 
Then, by hypothesis, $B\vect{x}\not\in E_e(M_h)$ and $QB\vect{x}\ne \nullvect$ since the lines of $Q$ generate the subspace orthogonal to $E_e(M_h)$.
Thus we have $ \operatorname{rank}(Q B) =\kappa $ which implies that there is a submatrix
$Q'$ of $Q $ such that $Q' B$ is invertible.

For all $i\in\{1,\ldots, \kappa\}$, let $p_i$ to be the function such that for all vector $\vect{x}$, $p_i(\vect{x}) = (Q' \vect{x})_i$.
From Proposition \ref{bound<1}, for all $ i\in\{1,\ldots, \kappa\}$, there is $c_i\in\mathbb{R}$ such that for any two factors $u$ and $v$ 
of $\operatorname{Fact^\infty}(h)$, $|p_i(\Psi(u)-\Psi(v))|\leq c_i$. 

Let $\vect{X}=\{\vect{x} \in \vect{x_0} + \Lambda : \forall i\in \{1,\ldots, \kappa\},\ |p_i(\vect{x})| \le c_i\}$.
Since we are only interested in realizable solutions, $\vect{d}_m$ has to be in $\vect{X}$.
Let $\vect{X}_B=\{\vect{x} \in\mathbb{Z}^\kappa:\ (\vect{x_0} + B \vect{x})\in \vect{X} \} $.
Let $\vect{x}\in \vect{X}_B$ then for all  $i$,  $|p_i(B \vect{x}+ \vect{x_0})| \le c_i$ thus $|p_i(B \vect{x})| \le c_i+|p_i(\vect{x_0})|$. 
Then $||Q' B \vect{x}||^2\leq \sum_{i=1}^{l} (c_i + |p_i(\vect{x_0})|)^2 = c$.
From Proposition \ref{WKT}, if $\mu_{min}$ is the smallest eigenvalue of $(Q' B)^* (Q' B)$, we have $\mu_{min}||\vect{x}||^2\leq||Q' B \vect{x}||^2\leq c$.
Since $Q' B$ is invertible, $\mu_{min} \not=0$ and  $||\vect{x}||\leq \sqrt{\frac{c}{\mu_{min}}}$.
Then $\vect{X}_B$ and $\vect{X}$ are finite, and we can easily compute them.
\end{proof}

\begin{proposition}\label{prop:largeabelianfree}
If no ancestor of $[\varepsilon, \ldots,\varepsilon, \nullvect, \ldots, \nullvect]$ by $g$ is realizable by $h$ then $g(\operatorname{Fact^\infty}(h))$ avoids abelian $k$-th power of period larger than $\max_{a\in\alphabet} |g(a)|$.
\end{proposition}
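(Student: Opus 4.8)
The plan is to prove the contrapositive statement on the level of templates, and then unwind definitions. Suppose $g(\operatorname{Fact^\infty}(h))$ contains an abelian $k$-th power $w$ of period $p > \max_{a\in\alphabet}|g(a)|$. Then $w = u_1 u_2 \cdots u_k$ with $|u_i| = p$ and $\Psi(u_{i+1}) = \Psi(u_1)$ for all $i$, so $w$ realizes the $k$-template $[\varepsilon,\ldots,\varepsilon,\nullvect,\ldots,\nullvect]$ inside the word $g(v)$ for some $v\in\operatorname{Fact^\infty}(h)$. The first step is to show that, because the period $p$ exceeds $\max_a |g(a)|$, each block $u_i$ is long enough that one can ``pull back'' the factorization of $w$ through $g$: there exist $v_1,\ldots,v_k\in\alphabet^*$, letters $a_1,\ldots,a_{k+1}\in\alphabet\cup\{\varepsilon\}$, prefixes $p_1,\ldots,p_{k+1}\in\operatorname{Pref}(g)$ and suffixes $s_1,\ldots,s_{k+1}\in\operatorname{Suff}(g)$ with $g(a_i)=p_i a'_i s_i$ (here $a'_i=\varepsilon$), $u_i = s_i g(v_i) p_{i+1}$, and $v = a_1 v_1 a_2 v_2 \cdots v_k a_{k+1}\in\operatorname{Fact^\infty}(h)$. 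This is exactly the same cutting argument as in the proof of Proposition~\ref{hasancestors}, except that the roles of the two levels are played by $g$ and its images rather than by $h$; the condition $p>\max_a|g(a)|$ plays the role of the length bound there and guarantees that each $u_i$ properly straddles at least one full $g$-block so that the ``inner'' part $g(v_i)$ is nonempty and the boundary pieces $s_i,p_{i+1}$ are genuinely a suffix and a prefix of some $g(a)$.

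The second step is to observe that the template realized by $v$, namely
$$t' = [a_1,\ldots,a_{k+1},\ \Psi(v_2)-\Psi(v_1),\ \ldots,\ \Psi(v_k)-\Psi(v_{k-1})],$$
is by construction a parent by $g$ of $[\varepsilon,\ldots,\varepsilon,\nullvect,\ldots,\nullvect]$: indeed the defining relations $g(a_i)=p_i a'_i s_i$ hold with $a'_i=\varepsilon$, and the vector relation $\nullvect = M_g(\Psi(v_{i+1})-\Psi(v_i)) + \Psi(s_{i+1}p_{i+2}) - \Psi(s_i p_{i+1})$ is just the Parikh-vector form of $\Psi(u_{i+1})=\Psi(u_i)$ after substituting $u_i=s_i g(v_i)p_{i+1}$. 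Since $v\in\operatorname{Fact^\infty}(h)$ realizes $t'$, the template $t'$ is realizable by $h$ and is an ancestor of $[\varepsilon,\ldots,\varepsilon,\nullvect,\ldots,\nullvect]$ by $g$; this contradicts the hypothesis that no ancestor of $[\varepsilon,\ldots,\varepsilon,\nullvect,\ldots,\nullvect]$ by $g$ is realizable by $h$. Hence no such abelian $k$-th power exists.

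The main obstacle, and the part requiring the most care, is the first step: verifying that the period lower bound $\max_a |g(a)|$ is exactly what is needed to make the pull-back work, i.e. that every period-$p$ block $u_i$ contains a full image $g(a)$ of a letter so that the decomposition $u_i = s_i g(v_i) p_{i+1}$ with $s_i\in\operatorname{Suff}(g)$, $p_{i+1}\in\operatorname{Pref}(g)$ and $v_i\in\alphabet^*$ is forced, and that the cut points line up consistently across consecutive blocks so that the $a_i$ and the shared boundary words $s_i,p_{i+1}$ are well defined (a single letter $a_i$ of $v$ straddling the boundary between $u_{i-1}$ and $u_i$ must have its $g$-image split as $p_i a'_i s_i$). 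This is a routine but slightly delicate bookkeeping argument, entirely parallel to Proposition~\ref{hasancestors}; once it is in place the rest is immediate from the definition of parent by $g$ and the definition of realizable ancestor.
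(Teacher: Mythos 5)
Your proof is correct and is precisely the intended argument: the paper states Proposition~\ref{prop:largeabelianfree} without an explicit proof, and the justification is exactly this one-step pull-back through $g$, i.e.\ the analogue of Proposition~\ref{hasancestors} with $g$ in place of $h$, where for the trivial template $\Delta(t)=0$ and all blocks have length equal to the period, so the bound $p>\max_{a\in\alphabet}|g(a)|$ directly plays the role of the length hypothesis there, and the factor of $v$ that realizes the resulting parent lies in $\operatorname{Fact^\infty}(h)$, contradicting the hypothesis. One small imprecision: the period bound does not force $g(v_i)$ to be nonempty (both boundary pieces $s_i, p_{i+1}$ can be nearly full letter images); what it actually guarantees is that the two cut points delimiting $u_i$ cannot both lie inside the image of one and the same letter occurrence of $v$, so the straddling letters are distinct occurrences in order and the decomposition $u_i=s_i\, g(v_i)\, p_{i+1}$ (possibly with $v_i=\varepsilon$) is well defined, which is all the argument needs.
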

The condition of Proposition \ref{prop:largeabelianfree} can be easily checked by a computer using Proposition \ref{prop:largeperiodset} and Theorem \ref{decidingtemplates}. 
If one wants to decide if $g(\operatorname{Fact^\infty}(h))$ avoids abelian $k$-th powers of period at least $p\le\max_{a\in\alphabet} |g(a)|$, then one can use
Proposition \ref{prop:largeabelianfree} and check if $g(\operatorname{Fact^\infty}(h))$ does not contain an abelian $k$-th power of period $l$ for every $p\le l < \max_{a\in\alphabet} |g(a)|$.
If $p> \max_{a\in\alphabet} |g(a)|$, then one can take a large enough integer $k$ such that $p\le \max_{a\in\alphabet} |g(h^k(a))|$, and make the computation on $g\circ h^k$ instead of $g$.
Note that if $E_e(M_h) \cap \Ker{M_g} = \nullvectset$, then for every $k\in \mathbb{N}$, $E_e(M_h) \cap \Ker{M_{g\circ h^k}} = \nullvectset$.
Otherwise, let $\vect{x} \in (E_e(M_h) \cap \Ker{M_{g\circ h^k}}) \setminus \nullvectset$. Then $M_{g\circ h^k}=M_gM_h^k$, and $M_h^k\vect{x} \in \Ker{M_g}$. Moreover $\vect{x}\in E_e(M_g)$, that is $M_h^k\vect{x} \in E_e(M_g)$ and $M_h^k\vect{x}\ne \nullvect$.
Thus $M_h^k\vect{x} \in E_e(M_h) \cap \Ker{M_g} \setminus \nullvectset$, and we have a contradiction.
Consequently we have the following theorem.

\begin{theorem}\label{th:largeab}
Let $h:\alphabet^*\rightarrow\alphabet^*$ be a primitive morphism with no eigenvalue of absolute value $1$, let $g: \alphabet^* \rightarrow \alphabet'^*$ be a morphism, and let $p,k\in \mathbb{N}$.
If $E_e(M_h) \cap \Ker{M_g}  = \nullvectset$  then one can decide
whether $g(h^\infty(a))$ avoids abelian $k$-th power of period larger than $p$.
\end{theorem}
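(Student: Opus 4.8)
The plan is to split ``abelian $k$-th power of period larger than $p$'' into the finitely many periods $l$ with $p<l\le\delta_g$ and the periods $l>\delta_g$, where $\delta_g:=\max_{a\in\alphabet}|g(a)|$, and to decide each part. First I would reduce to the case $p<\delta_g$: if $p\ge\delta_g$, choose $m\in\mathbb{N}$ with $\max_{a\in\alphabet}|g(h^m(a))|>p$ and $M_h^m$ positive (possible since $h$ is primitive), and replace $g$ by $g\circ h^m$. Since $h^\infty(a)$ is a fixed point of $h$ we have $(g\circ h^m)(h^\infty(a))=g(h^\infty(a))$, so the word is unchanged, and the hypothesis is preserved: $M_{g\circ h^m}=M_gM_h^m$ and, $E_e(M_h)$ being $M_h$-invariant with $M_h$ invertible on it, a nonzero vector of $E_e(M_h)\cap\Ker{M_{g\circ h^m}}$ would be sent by $M_h^m$ to a nonzero vector of $E_e(M_h)\cap\Ker{M_g}$, a contradiction. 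Assuming now $p<\delta_g$, for each of the finitely many $l$ with $p<l\le\delta_g$ I would decide by brute force whether $g(h^\infty(a))$ contains an abelian $k$-th power of period $l$: such a word has length $kl$ and is a factor of $g(u)$ for a factor $u$ of $h^\infty(a)$ of length bounded in terms of $k,l,h$ (here one uses that $h$ is primitive), and the factors of $h^\infty(a)$ of bounded length are computable by Proposition~\ref{prop:factinfprim}; so one lists all length-$kl$ factors of $g(h^\infty(a))$ and tests each for being an abelian $k$-th power.

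It remains to decide whether $g(h^\infty(a))$ contains an abelian $k$-th power of period larger than $\delta_g$. The hypothesis of Proposition~\ref{prop:largeabelianfree} is decidable: Proposition~\ref{prop:largeperiodset} applies (its assumptions are ours) and produces a computable finite set $S$ containing every template that is a parent of $[\varepsilon,\dots,\varepsilon,\nullvect,\dots,\nullvect]$ by $g$ and realizable by $h$, and Theorem~\ref{decidingtemplates} decides, for each $t\in S$, whether $t$ is realizable by $h$. If none of them is, then by Proposition~\ref{prop:largeabelianfree} no abelian $k$-th power of period larger than $\delta_g$ occurs, and together with the finite search above this answers the question. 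If some $t\in S$ is realizable by $h$, I would conversely deduce that $g(h^\infty(a))$ \emph{does} contain an abelian $k$-th power of period larger than $\delta_g$: a word $w\in\operatorname{Fact^\infty}(h)$ realizing $t$ makes a factor of $g(w)$ realize $[\varepsilon,\dots,\varepsilon,\nullvect,\dots,\nullvect]$, hence produces an abelian $k$-th power in $g(h^\infty(a))$, which then has to be pumped up to arbitrarily large period. To make that effective I would run the whole construction on $g\circ h^m$ for $m=0,1,2,\dots$; the transversality $E_e(M_h)\cap\Ker{M_{g\circ h^m}}=\nullvectset$ persists at each step (as above), so Proposition~\ref{prop:largeperiodset} and Theorem~\ref{decidingtemplates} keep applying and the condition stays decidable, while $\delta_{g\circ h^m}$ grows without bound: a genuine long abelian power eventually has period $\le\delta_{g\circ h^m}$ and is caught by the finite search, and otherwise the condition of Proposition~\ref{prop:largeabelianfree} eventually holds and certifies avoidance.

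The technical heart — and the main obstacle — is exactly the converse half of Proposition~\ref{prop:largeabelianfree} used above: to show that a template realizable by $h$ and parent of $[\varepsilon,\dots,\varepsilon,\nullvect,\dots,\nullvect]$ by $g$ really forces abelian $k$-th powers of period larger than $\max_a|g(a)|$ in $g(h^\infty(a))$, and not only short ones — the latter being a genuine possibility, since Theorem~\ref{makcorr} produces a word with abelian squares of small period but none of large period. This is a pumping/compactness argument within the template machinery, and it is what makes the iteration on $g\circ h^m$ terminate. A secondary point to verify carefully, in the spirit of Proposition~\ref{hasancestors} but desubstituting by $g$ rather than $h$, is that ``period larger than $\max_a|g(a)|$'' is precisely the length condition under which one desubstitution by $g$ goes through, so that handling the parents of the single template $[\varepsilon,\dots,\varepsilon,\nullvect,\dots,\nullvect]$ — rather than an unbounded chain of ancestors — suffices.
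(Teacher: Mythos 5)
Your overall route is the paper's: reduce to the case $p<\max_{a\in\alphabet}|g(a)|$ by passing to $g\circ h^m$ (your invariance argument showing $E_e(M_h)\cap\Ker{M_{g\circ h^m}}=\nullvectset$ is preserved is exactly the one in the paper, stated more cleanly), brute-force the finitely many periods in the intermediate range using the computability of bounded-length factors, and certify the absence of longer powers via Proposition~\ref{prop:largeperiodset}, Theorem~\ref{decidingtemplates} and Proposition~\ref{prop:largeabelianfree}. You are also right --- and more explicit than the paper --- that this only yields a semi-decision procedure unless one proves a converse: Proposition~\ref{prop:largeabelianfree} is a one-way implication, and a parent of $[\varepsilon,\dots,\varepsilon,\nullvect,\dots,\nullvect]$ by $g$ that is realizable by $h$ only certifies \emph{some} abelian $k$-th power in $g(\operatorname{Fact^\infty}(h))$, not one of period larger than $\max_a|g(a)|$.

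The gap is that you leave this converse as an unproved ``pumping/compactness argument.'' Here is what can and cannot be salvaged. If a realizable parent $t=[a_1,\dots,a_{k+1},\dots]$ is realized by $a_1w_1a_2\cdots w_ka_{k+1}$ with nonempty interior $w_1a_2\cdots a_kw_k$, the resulting power contains a full image $g(h^m(x))$ of some letter $x$, hence has period at least $\frac{1}{k}\min_{a}|g(h^m(a))|$, which exceeds $p$ for $m$ large because $h$ is primitive with an eigenvalue greater than $1$; such parents therefore cannot survive your iteration on $m$ if the word really avoids long powers. The genuinely problematic case is the degenerate parents realized only with empty interior (all $w_i=\varepsilon$ and all interior $a_i=\varepsilon$): these correspond to a short abelian power straddling a single block boundary of the $g\circ h^m$-decomposition --- for $k=2$, a suffix $s$ of $g(h^m(b))$ and a prefix $q$ of $g(h^m(c))$ with $\Psi(s)=\Psi(q)$ and $bc\in\operatorname{Fact^\infty}(h)$ --- and they can in principle persist for every $m$ (for instance if some block always ends with the letter the next block starts with, giving a period-$1$ square at every boundary). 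So your iteration is not guaranteed to terminate as written, and these degenerate parents must be treated separately, e.g.\ by deciding realizability with nonempty interior, or by reading off the (bounded) period $\frac{1}{k}(|s_1|+|p_{k+1}|)$ that an empty-interior parent actually produces and discarding it when that period is at most $p$. It is only fair to add that the paper's own one-paragraph justification of Theorem~\ref{th:largeab} does not address this point either; apart from this correctly flagged but unresolved step, your proposal matches the paper.
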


In Section \ref{sec:results:large}, we present a morphic word over 3 letters which avoids large abelian squares.

\subsection{Deciding if a pure morphic word avoids additive powers on $\mathbb{Z}^d$}

In this part we consider the morphism $\Phi: (\alphabet^*, . ) \rightarrow (\mathbb{Z}^d, +)$ with $d\in \mathbb{N}$.
Let the matrix $F_\Phi$ be such that $\forall w$, $\Phi(w) = F_\Phi \Psi(w)$.

\begin{proposition}\label{prop:insersect}
  If $M_h$ has no eigenvalue of absolute value $1$ and $E_e(M_h) \cap \Ker{\Phi} = \nullvectset$
  then one can compute a finite set of templates $S$  
  such that each $k$-th power modulo $\Phi$ in $\operatorname{Fact^\infty(h)}$ is a realization of a template in $S$.
\end{proposition}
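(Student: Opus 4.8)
The plan is to imitate the parent computation of Section~\ref{sec:main} and of Proposition~\ref{prop:largeperiodset}. First note that a word $w=w_1w_2\ldots w_k$ (with $w_i\in\alphabet^*$) is a $k$-th power modulo $\Phi$ if and only if $F_\Phi(\Psi(w_{i+1})-\Psi(w_i))=\nullvect$ for every $i\in\{1,\ldots,k-1\}$, that is, if and only if $w$ realizes the template $[\varepsilon,\ldots,\varepsilon,\vect{d}_1,\ldots,\vect{d}_{k-1}]$ with $\vect{d}_i=\Psi(w_{i+1})-\Psi(w_i)\in\Ker{\Phi}\cap\mathbb{Z}^n$. Moreover, if $w\in\operatorname{Fact^\infty}(h)$ then $w_1,\ldots,w_k\in\operatorname{Fact^\infty}(h)$, so each $\vect{d}_i$ belongs to $\Ker{\Phi}$ and has the form $\Psi(u)-\Psi(v)$ with $u,v\in\operatorname{Fact^\infty}(h)$. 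Hence it suffices to compute a finite set $\vect{D}\subseteq\Ker{\Phi}\cap\mathbb{Z}^n$ containing every such vector, and then take
$$S=\left\{\,[\varepsilon,\ldots,\varepsilon,\vect{d}_1,\ldots,\vect{d}_{k-1}]\ :\ \vect{d}_1,\ldots,\vect{d}_{k-1}\in\vect{D}\,\right\},$$
which is finite, computable, and contains a template realized by every $k$-th power modulo $\Phi$ in $\operatorname{Fact^\infty}(h)$.

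To bound the vectors of $\vect{D}$, fix $\vect{d}=\Psi(u)-\Psi(v)$ with $u,v\in\operatorname{Fact^\infty}(h)$ and $\vect{d}\in\Ker{\Phi}$, and let $M_h=PJP^{-1}$ be a Jordan decomposition. By Proposition~\ref{bound<1}, $|r_i(\Psi(u))|$ and $|r_i(\Psi(v))|$ are at most $r_i^*/2$ for every index $i$ with $|\lambda_{b(i)}|<1$, hence $|r_i(\vect{d})|\le r_i^*$ for all such $i$. The remaining (expanding) coordinates of $\vect{d}$ in the basis $P$ are controlled by the hypothesis: since $M_h$ has no eigenvalue of absolute value $1$, the linear functionals given by the rows $i$ of $P^{-1}$ with $|\lambda_{b(i)}|<1$ have common kernel exactly $E_e(M_h)$, so, as $E_e(M_h)\cap\Ker{\Phi}=\nullvectset$, their restrictions to $\Ker{\Phi}$ separate points of $\Ker{\Phi}$. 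Concretely, use the Smith decomposition of $F_\Phi$ (Proposition~\ref{Smith}) to obtain a matrix $\mathcal{B}$ whose columns form a basis of the lattice $\Lambda=\Ker{\Phi}\cap\mathbb{Z}^n$, and let $Q$ be the submatrix of $P^{-1}$ made of the rows $i$ with $|\lambda_{b(i)}|<1$. The separation property gives $\operatorname{rank}(Q\mathcal{B})=\dim\Lambda$, so some square submatrix $Q'\mathcal{B}$ of $Q\mathcal{B}$ is invertible. Writing $\vect{d}=\mathcal{B}\vect{y}$ with $\vect{y}\in\mathbb{Z}^{\dim\Lambda}$, the entries of $Q'\mathcal{B}\vect{y}$ are among the $r_i(\vect{d})$ with $|\lambda_{b(i)}|<1$, so $\|Q'\mathcal{B}\vect{y}\|^2\le c$ where $c=\sum_{i:\,|\lambda_{b(i)}|<1}(r_i^*)^2$. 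By Proposition~\ref{WKT}, $\mu_{min}\|\vect{y}\|^2\le\|Q'\mathcal{B}\vect{y}\|^2\le c$ with $\mu_{min}>0$ the smallest eigenvalue of $(Q'\mathcal{B})^*(Q'\mathcal{B})$, so $\vect{y}$, and therefore $\vect{d}$, ranges over a finite, effectively computable set: enumerate the integer points $\vect{y}$ in the ball of radius $\sqrt{c/\mu_{min}}$ and keep those for which $\mathcal{B}\vect{y}$ lies in $\Ker{\Phi}\cap\mathbb{Z}^n$ (automatic) and has contracting $P$-coordinates within the prescribed bounds.

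The one step that is not already done in Section~\ref{sec:main} is the rank identity $\operatorname{rank}(Q\mathcal{B})=\dim\Lambda$, equivalently the injectivity of $Q$ on $\Ker{\Phi}$, i.e.\ the fact that a bound on the contracting $P$-coordinates pins down a vector of $\Ker{\Phi}$; this is precisely what the hypothesis $E_e(M_h)\cap\Ker{\Phi}=\nullvectset$ provides, and it is the crux of the argument (the rest is the same bookkeeping as for parents and as in Proposition~\ref{prop:largeperiodset}). Note finally that, exactly as for Theorem~\ref{th:largeab}, once $S$ has been computed one can apply Theorem~\ref{decidingtemplates} to each of its finitely many templates, and thus decide whether $\operatorname{Fact^\infty}(h)$, in particular $h^\infty(a)$, contains a $k$-th power modulo $\Phi$.
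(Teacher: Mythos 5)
Your proof is correct and follows essentially the same route as the paper: the paper's own proof simply reduces to the argument of Proposition~\ref{prop:largeperiodset}, i.e.\ bound the contracting $P$-coordinates of the difference vectors via Proposition~\ref{bound<1}, take a lattice basis of $\Ker{\Phi}\cap\mathbb{Z}^n$ from the Smith decomposition of $F_\Phi$, use $E_e(M_h)\cap\Ker{\Phi}=\nullvectset$ to get an invertible $Q'\mathcal{B}$, and apply Proposition~\ref{WKT} to enumerate finitely many candidate vectors. Your write-up just makes explicit the homogeneous case ($\vect{x_0}=\nullvect$) and the assembly of the finite template set, which matches the paper's intent.
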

\begin{proof}
One wants to compute the set $S$ of all realizable templates $t=[\varepsilon,\ldots, \varepsilon, \vect{d}_1,\ldots, \vect{d}_{k-1}]$
such that $F_\Phi \vect{d}_i={\nullvect}$.
This set is finite, and we can compute a finite super-set of it exactly as in Proposition \ref{prop:largeperiodset} using Smith normal form of $F_\Phi$ and Proposition \ref{WKT} to bound 
the coefficients of the elements of the basis.
\end{proof}

Proposition \ref{prop:insersect} combined with Theorem \ref{decidingtemplates} give directly the following result which allows us to decide
 $k$-th-power-modulo-$\Phi$ freeness.

\begin{theorem}\label{addfree}
Let $h:\alphabet^*\rightarrow\alphabet^*$ be a primitive morphism with no eigenvalue of absolute value $1$, and let $\Phi: \alphabet^* \rightarrow \mathbb{Z}^d$ a morphism.
If $E_e(M_h) \cap \Ker{\Phi}  = \nullvectset$  then one can decide
whether every word in $\operatorname{Fact^\infty}(h)$ is $k$-th-power-modulo-$\Phi$-free.
\end{theorem}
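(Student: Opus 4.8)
The plan is to derive Theorem~\ref{addfree} as an almost immediate corollary of two ingredients already in place: Proposition~\ref{prop:insersect}, which under the hypothesis $E_e(M_h)\cap\Ker{\Phi}=\nullvectset$ produces a finite, explicitly computable set $S$ of templates such that every uniform $k$-th power modulo $\Phi$ occurring in $\operatorname{Fact^\infty}(h)$ realizes some $t\in S$; and Theorem~\ref{decidingtemplates}, which (since $h$ is primitive with no eigenvalue of absolute value $1$) lets us decide, for any single template $t$, whether $\operatorname{Fact^\infty}(h)$ realizes $t$. So the algorithm is: compute $S$, then for each $t\in S$ run the decision procedure of Theorem~\ref{decidingtemplates}; answer ``not $k$-th-power-modulo-$\Phi$-free'' iff some $t\in S$ is realizable.

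First I would make precise the reduction. A $k$-th power modulo $\Phi$ with $w=w_1\ldots w_k$, $|w_1|=\cdots=|w_k|$ and $\Phi(w_i)=\Phi(w_1)$ for all $i$, is exactly a realization of a template $t=[\varepsilon,\ldots,\varepsilon,\vect{d}_1,\ldots,\vect{d}_{k-1}]$ with $\vect{d}_i=\Psi(w_{i+1})-\Psi(w_i)$ satisfying $F_\Phi\vect{d}_i=\nullvect$ for every $i$. (The equal-length condition is automatically captured because the template's border letters are all $\varepsilon$ and the $w_i$ have Parikh vectors differing only by the $\vect{d}_i$; but more to the point, the $\vect{d}_i$ record exactly the Parikh differences, and $F_\Phi\vect{d}_i=\nullvect$ is equivalent to $\Phi(w_{i+1})=\Phi(w_i)$. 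One should remark that ``uniform''/``same length'' is handled by noting $\Phi$ being given and the template formalism: if one wants literally equal lengths one can work with the all-$1$ row appended to $F_\Phi$, which only shrinks $\Ker{\Phi}$ and hence preserves the hypothesis $E_e(M_h)\cap\Ker{\Phi}=\nullvectset$.) Thus $\operatorname{Fact^\infty}(h)$ contains a $k$-th power modulo $\Phi$ iff at least one template in the set $S$ of Proposition~\ref{prop:insersect} is realized by a word of $\operatorname{Fact^\infty}(h)$.

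Next I would invoke Proposition~\ref{prop:insersect} to get that $S$ is finite and computable, using the hypothesis $E_e(M_h)\cap\Ker{\Phi}=\nullvectset$ (this is the place the hypothesis is used: it forces $Q'F_\Phi$-type submatrices to be invertible, bounding the coefficients of the vectors via Proposition~\ref{WKT}, exactly as in the parent computation of Section~\ref{sec:main} and in Proposition~\ref{prop:largeperiodset}). Then for each $t\in S$, apply Theorem~\ref{decidingtemplates}: $h$ is primitive with no eigenvalue of modulus $1$ by hypothesis, so it is decidable whether $\operatorname{Fact^\infty}(h)$ realizes $t$. Since $S$ is finite, running this finitely many times and taking the disjunction decides whether any $k$-th power modulo $\Phi$ occurs, i.e.\ whether every word of $\operatorname{Fact^\infty}(h)$ is $k$-th-power-modulo-$\Phi$-free; and by Proposition~\ref{prop:factinfprim} this is the same as asking the question for the fixed point $h^\infty(a)$.

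The only genuinely substantive point — and hence the ``main obstacle'' — is already discharged by the earlier propositions: it is the finiteness of $S$, which rests on the transversality condition $E_e(M_h)\cap\Ker{\Phi}=\nullvectset$ together with the boundedness estimates of Propositions~\ref{bound<1} and~\ref{WKT}. Everything after that is bookkeeping: translating ``uniform $k$-th power modulo $\Phi$'' into the template language, checking that the equal-length constraint is either free or absorbable into $F_\Phi$, and observing that a finite union of decidable conditions is decidable. I would therefore keep the proof short, essentially one paragraph: ``By Proposition~\ref{prop:insersect} compute the finite set $S$; a word of $\operatorname{Fact^\infty}(h)$ realizing some $t\in S$ exists iff there is a $k$-th power modulo $\Phi$; by Theorem~\ref{decidingtemplates} realizability of each $t\in S$ is decidable; conclude.''
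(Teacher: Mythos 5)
Your proposal is correct and matches the paper's own argument, which is exactly the one-line combination ``Proposition~\ref{prop:insersect} combined with Theorem~\ref{decidingtemplates} give directly the result'': compute the finite set $S$ of templates with $F_\Phi\vect{d}_i=\nullvect$ and decide realizability of each. Your extra discussion of the equal-length constraint is unnecessary here (the theorem concerns general, not uniform, $k$-th powers modulo $\Phi$), but your remark that appending an all-ones row to $F_\Phi$ only shrinks $\Ker{\Phi}$ is a valid way to handle the uniform case as well.
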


\section{Results}\label{sec:results}

In this section we use the algorithms described  in Sections  \ref{sec:main} and \ref{sec:app} to show that additive squares are avoidable over $\mathbb{Z}^2$ and that
abelian squares of period more than $5$ are avoidable over the ternary alphabet.
We also give some other new results about additive power avoidability and long $2$-abelian power avoidability.

\subsection{Abelian-square-free pure morphic words}\label{sec:results:main}
Let $h_6$ be the following morphism:  
$$h_6:\left\{
  \begin{array}{llll}
      a \rightarrow & ace \hspace{1cm}& 
      b \rightarrow & adf\\ 
      c \rightarrow & bdf& 
      d \rightarrow & bdc\\ 
      e \rightarrow & afe& 
      f \rightarrow & bce.\\ 
    \end{array}
  \right.
$$

\begin{theorem}\label{abelianthe}
$h_6^\omega(a)$ is abelian-square-free.
\end{theorem}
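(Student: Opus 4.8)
The plan is to apply the decision procedure developed in Section~\ref{sec:main}. First I would verify that $h_6$ satisfies the hypotheses of Theorem~\ref{decidingtemplates}: it is primitive (one checks that some fixed power, say $h_6^k$ for small $k$, has a positive incidence matrix, so that all letters occur in every $h_6^k(x)$), and its incidence matrix $M_{h_6}$ has no eigenvalue of absolute value exactly $1$. The latter is a finite computation: compute the characteristic polynomial of the $6\times 6$ matrix $M_{h_6}$ and check that none of its roots lies on the unit circle (equivalently, that $\det(M_{h_6} - \zeta I)$ has no root with $|\zeta|=1$; since the entries are integers one can argue via the rational/algebraic structure of the eigenvalues, or simply estimate them numerically with a certified bound). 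One also records that $M_{h_6}$ has at least one eigenvalue of absolute value $>1$ (its spectral radius, which is $>1$ since the column sums are all $3$), so that $h_6^\omega(a)$ is well-defined and infinite.

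Next I would invoke Corollary~\ref{decidingavoidab}: since $h_6$ is primitive with no eigenvalue of modulus $1$, it is decidable whether the fixed points of $h_6$ are abelian-$k$-th-power-free, here with $k=2$. Concretely, one sets $t_0 = [\varepsilon,\varepsilon,\varepsilon,\nullvect]$, the $2$-template whose realizations are exactly the abelian squares. Following the proof of Theorem~\ref{decidingtemplates}: compute a Jordan decomposition $M_{h_6}=PJP^{-1}$, determine from Proposition~\ref{bound<1} the bounds $r_i^*$ on the contracting coordinates (using the refinement via $h_6^l$ to keep the bounds small enough for the computation to be feasible), build the finite set $\mathcal{R}_B$, then run the ancestor-closure algorithm (Computation of the parents and ancestors) to produce a finite set $S$ with $\operatorname{Ranc}_{h_6}(t_0) \subseteq S \subseteq \operatorname{Anc}_{h_6}(t_0)$ — using the Smith-decomposition argument to compute realizable parents even though $M_{h_6}$ need not be invertible. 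Finally, by Proposition~\ref{factorvsdiscrset}, with $s=\max_{t\in S} k\!\left(\frac{(k-1)\Delta(t)}{2}+\delta+1\right)+1$ (here $\delta=3$), one checks by exhaustive inspection of the factors of length at most $s$ of $\operatorname{Fact^\infty}(h_6)$ whether any realizes a template in $S$. By Proposition~\ref{prop:factinfprim} these factors are exactly the factors of $h_6^\omega(a)$, so this decides whether $h_6^\omega(a)$ is abelian-square-free.

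The claim is then that running this procedure returns ``yes'': no factor of $h_6^\omega(a)$ of length $\le s$ realizes any template of $S$, hence $h_6^\omega(a)$ is abelian-square-free. I expect the main obstacle to be purely computational rather than conceptual: the size of $S$ and of $s$ depend on how tight the contracting-eigenvalue bounds $r_i^*$ can be made, and obtaining bounds sharp enough that the exhaustive factor check is tractable is exactly the delicate point flagged in the ``Bounds on the $P$ basis'' discussion (the example of $h_8$ there shows the bound for a small eigenvalue dropping from $\sim 5.96$ to $\sim 1.44$ when passing to $h_8^{20}$). So the practical proof reduces to: (i) exhibit the Jordan data of $M_{h_6}$, (ii) report the computed bounds and the finite set $S$, (iii) report $s$ and confirm the exhaustive check over all factors of length $\le s$ passes. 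Everything else — primitivity, absence of unit-modulus eigenvalues, and the correctness of the pipeline — is either a one-line matrix computation or a direct citation of the results already proved in Section~\ref{sec:main}.
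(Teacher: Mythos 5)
Your proposal is correct and follows essentially the same route as the paper: the authors prove Theorem~\ref{abelianthe} exactly by running the Section~\ref{sec:main} pipeline on $h_6$ (eigenvalues $0,0,0,3,\sqrt{3},-\sqrt{3}$, so no unit-modulus eigenvalue; bounds $r_i^*$ of $4,\tfrac43,\tfrac43$ computed on $(h_6)^2$; 28514 parents and 48459 ancestors of $[\varepsilon,\varepsilon,\varepsilon,\nullvect]$; $s=44$) and then checking by computer that no factor of length less than $44$ realizes a template of $S$. The only difference is that you leave the numerical data to the computation while the paper reports it explicitly, which is a matter of presentation rather than substance.
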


We show it using the algorithm presented in Section \ref{sec:main}.
We provide a computer program to show Theorem \ref{abelianthe} (see Section \ref{sec:results:large}).

The matrix associated has the following eigenvalues: $0$ (with algebraic multiplicity $3$), $3$, $\sqrt{3}$ and $-\sqrt{3}$.
A Jordan decomposition of $M_{h_6}$ is $P J P^{-1}$, with: 

$$ J=\left[\begin{smallmatrix} 0&1&0&0&0&0\\ \noalign{\medskip}0&0&0&0&0&0\\ \noalign{\medskip}0&0&0&0&0&0\\ \noalign{\medskip}0&0&0&3&0&0
\\ \noalign{\medskip}0&0&0&0&\sqrt {3}&0\\ \noalign{\medskip}0&0&0&0&0&-\sqrt {3}\end{smallmatrix}\right]
\text{ and }
P= \left[\begin{smallmatrix}-\frac{1}{2}&0&-1&1&2+\sqrt {3}&2-\sqrt {3}\\ 
\noalign{\medskip}\frac{1}{2}&-1&0&1&-2-\sqrt {3}&\sqrt {3}-2\\ 
\noalign{\medskip}-\frac{1}{2}&1&-1&1&-1&-1\\ 
\noalign{\medskip}0&0&1&1&-3-2\,\sqrt {3}&2\,\sqrt {3}-3\\ 
\noalign{\medskip}0&\frac{1}{2}&1&1&3+2\,\sqrt {3}&3-2\,\sqrt {3}\\ 
\noalign{\medskip}\frac{1}{2}&-\frac{1}{2}&0&1&1&1 \end{smallmatrix}\right].$$

The bounds on $r^*_i$, $i\in\{1,2,3\}$ computed as explained in the proof of Proposition \ref{bound<1} on $(h_6)^2$, are respectively $4$, $\frac{4}{3}$ and $\frac{4}{3}$.
%(One can easily show that  $r^*_i$, $i\in\{1,2,3\}$ are actually those values.)
The template $[\varepsilon,\varepsilon,\varepsilon,\nullvect]$ has 28514 parents with respect to those bounds, and it has 48459 different ancestors including itself.
With this set of template the value of the $s$ from Proposition \ref{factorvsdiscrset} is 44. 
None of the factors of $h_6^\omega(a)$ of size less than 44 realizes a forbidden pattern so we can conclude that $h_6^\omega(a)$ avoids abelian squares.

\bigskip
Let $h_8$ be the following morphism:
$$h_8:
\left\{
  \begin{array}{llll}
      a \rightarrow & h \hspace{1cm}& 
      b \rightarrow & g\\ 
      c \rightarrow & f & 
      d \rightarrow & e\\ 
      e \rightarrow & hc & 
      f \rightarrow & ac\\ 
      g \rightarrow & db &  
      h \rightarrow & eb.\\ 
    \end{array}
  \right.
$$

\begin{theorem}
Words in $h_8^\infty$ (e.g. infinite fixed points of $(h_8)^2$) are abelian-square-free.
\end{theorem}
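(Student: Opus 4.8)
The plan is to verify the hypotheses of Corollary~\ref{decidingavoidab} for $h_8$ (with $k=2$) and then invoke the decision procedure of Section~\ref{sec:main}, exactly as was done for $h_6$ in Theorem~\ref{abelianthe}. First I would check that $h_8$ is primitive: although $h_8$ itself sends several letters to single letters, a short computation shows that some power $h_8^m$ has a positive matrix $M_{h_8^m}$, so $h_8$ is primitive and Proposition~\ref{prop:factinfprim} applies; in particular $\operatorname{Fact}(w)=\operatorname{Fact^\infty}(h_8)$ for any fixed point $w$ of a suitable power. Next I would compute the eigenvalues of $M_{h_8}$ and confirm that none has absolute value~$1$; since $h_8$ has a letter mapped to a single letter and branching elsewhere, the characteristic polynomial will have some eigenvalues of modulus $>1$ (the Perron root, coming from the primitivity) and the remaining ones of modulus $<1$, but crucially none on the unit circle. (The excerpt already mentions the relevant eigenvalue $\sim(0.33292,0.67077)$, so this check is concrete.)

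With primitivity and the no-eigenvalue-of-modulus-$1$ condition in hand, Corollary~\ref{decidingavoidab} guarantees that abelian-$2$-nd-power freeness of the fixed points of $h_8$ is decidable, and Proposition~\ref{prop:factinfprim} lets us transfer the statement from $\operatorname{Fact^\infty}(h_8)$ to every word in $h_8^\infty$ (equivalently to the infinite fixed points of $(h_8)^2$). Concretely I would: (i) compute a Jordan decomposition $M_{h_8}=PJP^{-1}$ and the bounds $r_i^*$ on the contracting coordinates as in the proof of Proposition~\ref{bound<1}, optimizing over a power of $h_8$ as suggested in the text (the $1.4394$ bound for $(h_8)^{20}$ mentioned earlier); (ii) run the ancestor-enumeration algorithm starting from $t_0=[\varepsilon,\varepsilon,\nullvect]$, using Smith decomposition of $M_{h_8}$ to solve the Diophantine systems for the parents, obtaining a finite set $S$ with $\operatorname{Ranc}_{h_8}(t_0)\subseteq S\subseteq\operatorname{Anc}_{h_8}(t_0)$; (iii) compute the bound $s$ of Proposition~\ref{factorvsdiscrset} and verify by exhaustive search that no factor of $h_8^\infty$ of length at most $s$ realizes any template of $S$. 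By Proposition~\ref{factorvsdiscrset} this shows $\operatorname{Fact^\infty}(h_8)$ does not realize $t_0$, i.e.\ contains no abelian square.

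The main obstacle is not conceptual but the same practical one as for $h_6$: the set of ancestors can be large (tens of thousands of templates), so the enumeration in step~(ii) must be carried out by computer, and the sharpness of the bounds $r_i^*$ matters for keeping $S$ and hence $s$ manageable — this is why one works with a high power of $h_8$ when computing the contracting bounds. We provide the computer program that performs this verification (see Section~\ref{sec:results:large}). One additional point worth checking is that $h_8$ has an infinite fixed point at all: since $h_8(a)=h$ and $h_8(h)=eb$, $h_8$ itself has no fixed point, but $(h_8)^2$ does (e.g.\ $(h_8)^2(a)=eb$ starts with $e$ and $(h_8)^2(e)$ starts with $e$, giving a fixed point of $(h_8)^2$), which is why the statement is phrased in terms of $h_8^\infty$ and fixed points of $(h_8)^2$. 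Since all factors of such a word lie in $\operatorname{Fact^\infty}(h_8)$, the verification above applies verbatim.
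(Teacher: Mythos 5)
Your proposal matches the paper's approach: the paper gives no separate argument for $h_8$ beyond invoking the same computer-assisted decision procedure of Section~\ref{sec:main} (Theorem~\ref{decidingtemplates} and Corollary~\ref{decidingavoidab}) that was used for $h_6$, with the contracting-coordinate bounds sharpened by passing to a power of $h_8$, exactly as you describe. Your additional observations (primitivity check, absence of eigenvalues of modulus $1$, and the existence of fixed points only for $(h_8)^2$) are correct and consistent with the paper's remark that $M_{h_8}$ is invertible with four eigenvalues of absolute value less than $1$.
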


This morphism may also be interesting because it is a small morphism which gives an abelian-square-free word, 
its matrix is invertible and it has 4 eigenvalues of absolute value less than 1.

\subsection{Additive-square-free words on $\mathbb{Z}^2$}\label{sec:results:addsq}
Let $\Phi $ be the following morphism:
$$\Phi:  
\left\{
  \begin{array}{llll}
      a \rightarrow & ( 1, 0, 0 )\hspace{1cm}& 
      b \rightarrow & ( 1, 1, 1 )\\ 
      c \rightarrow & ( 1, 2, 1 )& 
      d \rightarrow & ( 1, 0, 1 )\\ 
      e \rightarrow & ( 1, 2, 0 )& 
      f \rightarrow & ( 1, 1, 0 ).\\ 
    \end{array}
  \right.
$$
\begin{theorem}
$h_6^\omega(a)$ does not contains squares modulo $\Phi$. %-square.
\end{theorem}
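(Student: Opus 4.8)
The plan is to invoke the decidability result Theorem~\ref{addfree}, applied to $h=h_6$ and the morphism $\Phi\colon\alphabet^*\to\mathbb{Z}^3$. First I would check its hypotheses. The morphism $h_6$ is primitive (this is already used in Theorem~\ref{abelianthe}), and from the displayed Jordan form its eigenvalues are $0$, $3$, $\sqrt3$ and $-\sqrt3$, so none has absolute value $1$ (and $3$ has absolute value larger than $1$, so that $h_6^\infty$ is non-empty). By Proposition~\ref{prop:factinfprim}, $\operatorname{Fact}(h_6^\omega(a))=\operatorname{Fact^\infty}(h_6)$, so it is equivalent to decide whether every word of $\operatorname{Fact^\infty}(h_6)$ is $2$nd-power-modulo-$\Phi$-free. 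Note also that the first coordinate of $\Phi(x)$ equals $1$ for every letter $x$, so the first coordinate of $\Phi(w)$ is $|w|$; consequently a square modulo $\Phi$ is a pair of consecutive blocks of the same length whose images under the $\mathbb{Z}^2$-coding $x\mapsto(\Phi(x)_2,\Phi(x)_3)$ have equal sum. Thus this theorem asserts precisely that this coding of $h_6^\omega(a)$ contains no additive square, and so it proves Theorem~\ref{addsquareavoidable}.

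The remaining hypothesis of Theorem~\ref{addfree} is the transversality condition $E_e(M_{h_6})\cap\Ker{\Phi}=\nullvectset$, which is the crucial point. The expanding eigenspace $E_e(M_{h_6})$ is the $3$-dimensional subspace of $\mathbb{R}^6$ spanned by the columns of $P$ associated with $3$, $\sqrt3$ and $-\sqrt3$, that is, the last three columns of $P$; and $\Ker{\Phi}=\Ker{F_\Phi}$ is $3$-dimensional since $F_\Phi\in\mathbb{Z}^{3\times 6}$ has full rank. It therefore suffices to check that the restriction of $F_\Phi$ to $E_e(M_{h_6})$ is injective, i.e.\ that the $3\times3$ matrix whose columns are the images by $F_\Phi$ of those three generalized eigenvectors is invertible. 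A direct computation gives this matrix as $\left(\begin{smallmatrix}6&0&0\\6&3+3\sqrt3&3-3\sqrt3\\3&-6-3\sqrt3&-6+3\sqrt3\end{smallmatrix}\right)$, with determinant $-108\sqrt3\neq0$; hence $E_e(M_{h_6})\cap\Ker{\Phi}=\nullvectset$ and Theorem~\ref{addfree} applies.

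It then remains to run the decision procedure. Following Proposition~\ref{prop:insersect}, one uses the Smith decomposition of $F_\Phi$ together with Proposition~\ref{WKT} to bound the relevant coordinates, and so to compute a finite set $S$ with $\operatorname{Ranc}_{h_6}(t)\subseteq S\subseteq\operatorname{Anc}_{h_6}(t)$, where $t=[\varepsilon,\varepsilon,\varepsilon,\vect{d}_1]$ ranges over the templates with $F_\Phi\vect{d}_1=\nullvect$ (the parent relation of Section~\ref{sec:main} being used to keep $S$ small). By Proposition~\ref{factorvsdiscrset}, $\operatorname{Fact^\infty}(h_6)$ realizes some such template if and only if a factor of $h_6^\omega(a)$ of length at most the bound $s$ of that proposition does. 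Generating all factors up to length $s$ and comparing them to $S$ settles the question, and the provided computer program reports that no such factor realizes a template of $S$; hence $h_6^\omega(a)$ contains no square modulo $\Phi$.

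The main obstacle is the transversality check $E_e(M_{h_6})\cap\Ker{\Phi}=\nullvectset$: it is the single structural ingredient, and it is precisely what forces the set of realizable ancestors to be finite, so that the algorithm of Section~\ref{sec:main} can be applied at all. Once it is in place, the rest is a finite, computer-certified verification, entirely parallel to the one behind Theorem~\ref{abelianthe}.
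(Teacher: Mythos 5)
Your proposal is correct and follows essentially the same route as the paper: apply the decision procedure of Theorem~\ref{addfree} (via Proposition~\ref{prop:insersect} and the machinery of Section~\ref{sec:main}) to $h_6$ and $\Phi$, with the finite verification delegated to the provided computer program. Your explicit check of the hypothesis $E_e(M_{h_6})\cap\Ker{\Phi}=\nullvectset$, via the $3\times3$ matrix of images of the expanding generalized eigenvectors with determinant $-108\sqrt{3}$, is accurate and merely makes explicit a step the paper leaves to the accompanying code.
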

In order to check this Theorem, we provide with this article a code that applies the algorithm described in the previous sections to $\phi(h_6^\omega(a))$.

In other words, the fixed point $h_{\operatorname{add}}^\omega\left({\scriptscriptstyle\begin{pmatrix} 0\\ 0\end{pmatrix}}\right)$ of the following morphism does not contain any additive square.
$$h_{\operatorname{add}}:  
%      a \rightarrow & ace \hspace{1cm}& 
%      b \rightarrow & adf\\ 
%      c \rightarrow & bdf& 
%      d \rightarrow & bdc\\ 
%      e \rightarrow & afe& 
%      f \rightarrow & bce.\\ 
\left\{
\scriptscriptstyle
  \begin{array}{llll}
      \begin{pmatrix} 0\\ 0\end{pmatrix} \rightarrow & \begin{pmatrix} 0\\ 0\end{pmatrix} \begin{pmatrix} 2\\ 1\end{pmatrix} \begin{pmatrix} 2\\ 0\end{pmatrix}\hspace{1cm}& 
      \begin{pmatrix} 1\\ 1\end{pmatrix} \rightarrow & \begin{pmatrix} 0\\ 0\end{pmatrix} \begin{pmatrix} 0\\ 1\end{pmatrix} \begin{pmatrix} 1\\ 0\end{pmatrix}\\ 
      \begin{pmatrix} 2\\ 1\end{pmatrix} \rightarrow & \begin{pmatrix} 1\\ 1\end{pmatrix} \begin{pmatrix} 0\\ 1\end{pmatrix} \begin{pmatrix} 1\\ 0\end{pmatrix}& 
      \begin{pmatrix} 0\\ 1\end{pmatrix} \rightarrow & \begin{pmatrix} 1\\ 1\end{pmatrix} \begin{pmatrix} 0\\ 1\end{pmatrix} \begin{pmatrix} 2\\ 1\end{pmatrix}\\ 
      \begin{pmatrix} 2\\ 0\end{pmatrix} \rightarrow & \begin{pmatrix} 0\\ 0\end{pmatrix} \begin{pmatrix} 1\\ 0\end{pmatrix} \begin{pmatrix} 2\\ 0\end{pmatrix}& 
      \begin{pmatrix} 1\\ 0\end{pmatrix} \rightarrow & \begin{pmatrix} 1\\ 1\end{pmatrix} \begin{pmatrix} 2\\ 1\end{pmatrix} \begin{pmatrix} 2\\ 0\end{pmatrix}.\\ 
    \end{array}
  \right.
$$

It implies the following result:
\begin{theorem}\label{addsquareavoidable}
 $\mathbb{Z}^2$ is not uniformly $2$-repetitive.
\end{theorem}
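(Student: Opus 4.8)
The plan is to derive Theorem~\ref{addsquareavoidable} as an immediate consequence of the construction already displayed in this subsection, namely the morphism $\Phi: \{a,b,c,d,e,f\}^* \to \mathbb{Z}^3$ together with the claim that $h_6^\omega(a)$ contains no square modulo $\Phi$. First I would observe that the first coordinate of every letter's image under $\Phi$ equals $1$, so that for any factor $w$ of $h_6^\omega(a)$ the first coordinate of $\Phi(w)$ is exactly $|w|$. Consequently, two factors $u$ and $v$ satisfy $\Phi(u)=\Phi(v)$ if and only if $|u|=|v|$ and the last two coordinates agree; in particular a square modulo $\Phi$ in $h_6^\omega(a)$ is precisely a uniform square modulo the map $\Phi': \{a,b,c,d,e,f\}^* \to \mathbb{Z}^2$ obtained by discarding the first coordinate of $\Phi$ (this is the morphism whose image letters appear in $h_{\operatorname{add}}$, after relabelling $a,b,c,d,e,f$ by the pairs $(0,0),(1,1),(2,1),(0,1),(2,0),(1,0)$).

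Next I would record the definitional fact stated in the Preliminaries: a uniform $2$-nd power modulo $\Phi'$ is exactly an \emph{additive square} over the finite alphabet $\Phi'(\{a,b,c,d,e,f\}) \subseteq \mathbb{Z}^2$, when one transports the word $h_6^\omega(a)$ along the letter-to-letter coding $a,b,c,d,e,f \mapsto \Phi'(a),\dots,\Phi'(f)$. Thus the image of $h_6^\omega(a)$ under this coding is an infinite word over the $6$-element subset $\{(0,0),(1,1),(2,1),(0,1),(2,0),(1,0)\}$ of $\mathbb{Z}^2$ that contains no additive square; and this coded word is precisely the fixed point $h_{\operatorname{add}}^\omega\!\left({\scriptscriptstyle\begin{pmatrix}0\\0\end{pmatrix}}\right)$, since the coding intertwines $h_6$ with $h_{\operatorname{add}}$. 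Hence $\mathbb{Z}^2$ admits an infinite word over a finite subset with no additive $2$-nd power, which by the definition of uniform $k$-repetitivity means exactly that $\mathbb{Z}^2$ is not uniformly $2$-repetitive.

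The only real content to check — and the step I expect to be the main obstacle, though it is discharged by the algorithm of Section~\ref{sec:main} and its extension in Theorem~\ref{addfree} rather than by hand — is the assertion that $h_6^\omega(a)$ has no square modulo $\Phi$ at all. To invoke Theorem~\ref{addfree} one must verify its hypotheses: $h_6$ is primitive (a short computation shows $h_6^k$ contains all six letters for $k$ large enough, e.g. by checking that $M_{h_6}^k$ is positive for some small $k$), $h_6$ has no eigenvalue of absolute value $1$ (its eigenvalues are $0,0,0,3,\sqrt3,-\sqrt3$, as already displayed), and $E_e(M_{h_6}) \cap \Ker{\Phi} = \nullvectset$. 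The expanding eigenspace $E_e(M_{h_6})$ is the $3$-dimensional span of the columns of $P$ associated with $3,\sqrt3,-\sqrt3$, while $\Ker{\Phi}$ is the kernel of $F_\Phi$; since $F_\Phi$ has rank $3$ on a $6$-dimensional space, $\Ker{\Phi}$ is $3$-dimensional, and one checks by linear algebra that these two $3$-dimensional subspaces of $\mathbb{C}^6$ intersect trivially. With the hypotheses in place, Theorem~\ref{addfree} makes the property decidable, and the accompanying computer program carries out the decision: it computes the finite set $S$ of templates via Proposition~\ref{prop:insersect}, computes the bound $s$ via Proposition~\ref{factorvsdiscrset}, and confirms that no factor of $h_6^\omega(a)$ of length at most $s$ realizes any template in $S$. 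This completes the proof of Theorem~\ref{addsquareavoidable}.
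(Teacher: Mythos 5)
Your proposal is correct and follows essentially the same route as the paper: it relies on the same morphism $\Phi$ and the computer-assisted verification (via Proposition~\ref{prop:insersect} and Theorem~\ref{addfree}) that $h_6^\omega(a)$ has no square modulo $\Phi$, and then converts this, via the constant first coordinate, into the additive-square-free word over the finite subset of $\mathbb{Z}^2$ given by $h_{\operatorname{add}}$. The only difference is that you make explicit the length-tracking role of the first coordinate and the hypothesis checks ($h_6$ primitive, no eigenvalue of modulus $1$, $E_e(M_{h_6})\cap\ker\Phi=\nullvectset$), which the paper leaves implicit in its ``in other words'' step.
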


\subsection{Additive-cubes-free words on $\mathbb{Z}$}\label{sec:results:addcu}
% On this part the letter $i$ is implicitly replaced by the vector $(1,i)$.
%In~\cite{avoid3consblock}, authors
\citeN{avoid3consblock}
show that the fixed point of
$f:  
       0 \rightarrow 03, 
       1 \rightarrow 43, 
       3 \rightarrow 1, 
       4 \rightarrow 01, 
$ 
avoids additive cubes.
Our algorithm concludes that this morphism avoids additive cubes.

\citeN{abeliancube} shows that one can avoid additive cubes on the alphabet $\{0,1,5\}$. 
Thus the only open 4-letters alphabets on small integers are: $\{0,1,2,3\}$, $\{0,1,2,4\}$, $\{0,2,3,5\}$. 
We are able to prove that the answer is positive with the last two alphabets.

Let $h_4:  
\left\{
  \begin{array}{ll}
      0 \rightarrow & 001\\ 
      1 \rightarrow & 041\\ 
      2 \rightarrow & 41\\ 
      4 \rightarrow & 442\\ 
    \end{array}
  \right.
$
and $h'_4:  
\left\{
  \begin{array}{ll}
      0 \rightarrow & 03\\ 
      2 \rightarrow & 53\\ 
      3 \rightarrow & 2\\ 
      5 \rightarrow & 02.\\ 
    \end{array}
  \right.
$
\begin{theorem}
$h_4^\omega(0)$ and $h_4'^\omega(0)$ do not contain any additive cube.
\end{theorem}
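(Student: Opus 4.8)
The plan is to recognise an additive cube as a \emph{uniform $3$rd power modulo the sum morphism} and then invoke the decision procedure of Theorem~\ref{addfree}. Concretely, for $h_4$ over $\alphabet=\{0,1,2,4\}$ (resp.\ $h_4'$ over $\{0,2,3,5\}$) let $\Phi:\alphabet^*\to\mathbb{Z}$ send each letter to its integer value, so $F_\Phi=(0,1,2,4)$ (resp.\ $(0,2,3,5)$). A factor $w_1w_2w_3$ with $|w_1|=|w_2|=|w_3|$ is an additive cube exactly when it is a uniform $3$rd power modulo $\Phi$, that is, when it realises a template $[\varepsilon,\varepsilon,\varepsilon,\vect{d}_1,\vect{d}_2]$ with $F_\Phi\vect{d}_1=F_\Phi\vect{d}_2=0$. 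Hence it suffices to prove that every word of $\operatorname{Fact^\infty}(h_4)$ (resp.\ $\operatorname{Fact^\infty}(h_4')$) is $3$rd-power-modulo-$\Phi$-free and then transfer this to the fixed point.

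First I would check the hypotheses of Theorem~\ref{addfree}. Both morphisms are non-erasing, and a short computation on the incidence matrices $M_{h_4}$ and $M_{h_4'}$ shows that each is primitive: the follow-graph is strongly connected and carries the self-loop $0\to 0$, which forces aperiodicity. Since $0$ is a prefix of $h_4(0)$ and of $h_4'(0)$, the fixed points $h_4^\omega(0)$ and $h_4'^\omega(0)$ are well defined. It remains to verify that $M_h$ has no eigenvalue of absolute value $1$ and that $E_e(M_h)\cap\Ker{\Phi}=\nullvectset$. As $\Ker{\Phi}$ is a hyperplane of dimension $n-1=3$, one always has $\dim\bigl(E_e(M_h)\cap\Ker{\Phi}\bigr)\ge \dim E_e(M_h)-1$, so the intersection can be trivial only when $\dim E_e(M_h)\le 1$. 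By Perron--Frobenius the primitive matrix $M_h$ has a simple dominant real eigenvalue $\lambda_1>1$ equal to the growth rate, so $\dim E_e(M_h)=1$ amounts precisely to checking that every other eigenvalue has modulus strictly less than $1$ (in particular none lies on the unit circle); this is the numerical computation I would carry out. Granting it, $E_e(M_h)$ is spanned by the strictly positive Perron eigenvector $\vect{v}$, and $F_\Phi\vect{v}>0$ because $F_\Phi$ has nonnegative entries that are positive on coordinates where $\vect{v}$ is positive; hence $\vect{v}\notin\Ker{\Phi}$ and $E_e(M_h)\cap\Ker{\Phi}=\nullvectset$.

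With these hypotheses in hand, Proposition~\ref{prop:insersect} produces a finite set $S$ of templates containing every $3$rd power modulo $\Phi$ occurring in $\operatorname{Fact^\infty}(h)$, and Theorem~\ref{decidingtemplates} decides, for each $t\in S$, whether it is realised. Running the accompanying program returns that none of these templates is realizable, i.e.\ $\operatorname{Fact^\infty}(h)$ contains no $3$rd power modulo $\Phi$. Finally, since $h$ is primitive, Proposition~\ref{prop:factinfprim} gives $\operatorname{Fact}(h^\omega(0))=\operatorname{Fact^\infty}(h)$, so the fixed point itself avoids additive cubes, as claimed.

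The main obstacle is the spectral condition $\dim E_e(M_h)=1$: the entire reduction to a finite, machine-checkable family of templates (Corollary~\ref{finddiscrset}) hinges on $M_h$ possessing a single expanding eigenvalue, with all remaining eigenvalues strictly inside the unit disk. If some non-Perron eigenvalue had modulus larger than $1$ --- a priori possible even for a primitive matrix, since the Perron root need only be dominant, not the unique eigenvalue outside the unit circle --- then $E_e(M_h)$ would meet the hyperplane $\Ker{\Phi}$ nontrivially, the parent sets would become infinite, and the method would fail. Everything else, namely primitivity, the nonvanishing of $F_\Phi$ on the Perron eigenvector, and the terminating bounded search, is routine once this spectral fact is confirmed.
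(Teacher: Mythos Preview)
Your overall plan---verify the hypotheses of Theorem~\ref{addfree} and let the decision procedure run---is exactly what the paper does, but there is a genuine gap in your choice of $\Phi$. You take $\Phi:\alphabet^*\to\mathbb{Z}$ sending each letter to its integer value, with $F_\Phi=(0,1,2,4)$, and then assert that a \emph{uniform} $3$rd power modulo $\Phi$ is precisely a realization of some template $[\varepsilon,\varepsilon,\varepsilon,\varepsilon,\vect{d}_1,\vect{d}_2]$ with $F_\Phi\vect{d}_i=0$. That is not so: the condition $F_\Phi\vect{d}_i=0$ encodes only ``same sum''; the length condition $|w_1|=|w_2|=|w_3|$ is nowhere imposed. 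Hence those templates describe \emph{all} $3$rd powers modulo $\Phi$, uniform or not, and ``$3$rd-power-modulo-$\Phi$-free'' is strictly stronger than ``additive-cube-free''. The fatal point is that this stronger statement is \emph{false}: as the paper recalls in the preliminaries, an application of Szemer\'edi's theorem shows that $(\mathbb{Z},+)$ is $k$-repetitive for every $k$, so $h_4^\omega(0)$ necessarily contains (non-uniform) $3$rd powers modulo your one-dimensional $\Phi$. The algorithm would correctly report ``not $3$rd-power-modulo-$\Phi$-free'' and your argument would halt without conclusion.

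The fix---and this is what the paper does, cf.\ the parallel treatment of additive squares in Section~\ref{sec:results:addsq}, where $\Phi$ lands in $\mathbb{Z}^3$ with first coordinate constantly $1$---is to build the length into $\Phi$: take $\Phi:\alphabet^*\to\mathbb{Z}^2$ with $\Phi(a)=(1,a)$, so that
\[
F_\Phi=\begin{pmatrix}1&1&1&1\\0&1&2&4\end{pmatrix}
\quad\text{(resp.\ second row $(0,2,3,5)$ for $h_4'$).}
\]
Now a $3$rd power modulo $\Phi$ is \emph{exactly} an additive cube, $\Ker{\Phi}$ has dimension $2$ rather than $3$, and your Perron--Frobenius argument for $E_e(M_h)\cap\Ker{\Phi}=\nullvectset$ goes through (indeed more easily, since the all-ones row already kills the positive Perron vector and one only needs $\dim E_e(M_h)\le 2$). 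With this corrected $\Phi$, Theorem~\ref{addfree} applies and the computer verification succeeds.
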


It seems easy to find morphisms whose fixed points avoid additive cube for any $4$-letters alphabet, except for $\{0,1,2,3\}$.
% over $\mathbb{Z}$. We can ask the following question:

\begin{Problem}
 Are additive cubes avoidable over $\{0,1,2,3\}$?
\end{Problem}

\subsection{M\"akel\"a's Problem 1}\label{sec:results:large}
Let $g_3$ be the following morphism:
$$g_3:  
\left\{
  \begin{array}{ll}
      a \rightarrow & \texttt{bbbaabaaac}\\ 
      b \rightarrow & \texttt{bccacccbcc}\\ 
      c \rightarrow & \texttt{ccccbbbcbc}\\ 
      d \rightarrow & \texttt{ccccccccaa}\\ 
      e \rightarrow & \texttt{bbbbbcabaa}\\ 
      f \rightarrow & \texttt{aaaaaaabaa}.\\ 
    \end{array}
  \right.
$$

\begin{theorem}\label{makanswerr}
The word obtained by applying $g_3$ to the fixed point of $h_6$, that is $g_3(h_6^\omega(a))$, does not contain any square of period more than $5$.
\end{theorem}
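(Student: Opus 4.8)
\textbf{Proof proposal for Theorem~\ref{makanswerr}.}

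The plan is to reduce the statement to a finite check performed by the decision procedure of Section~\ref{sec:app}, applied with $h=h_6$ and $g=g_3$. First I would verify the hypotheses of Theorem~\ref{th:largeab}: the morphism $h_6$ is primitive (already needed for Theorem~\ref{abelianthe}, and $M_{h_6^k}$ is positive for small $k$), and its eigenvalues $0,0,0,3,\sqrt3,-\sqrt3$ have no value of absolute value $1$. It remains to check that $E_e(M_{h_6})\cap\Ker{M_{g_3}}=\nullvectset$. Here $E_e(M_{h_6})$ is the three–dimensional expanding eigenspace spanned by the columns of $P$ associated to $3$, $\sqrt3$ and $-\sqrt3$, and $\Ker{M_{g_3}}$ is three–dimensional because $g_3$ maps a six–letter alphabet onto three letters with $M_{g_3}$ of rank $3$ (each column of $M_{g_3}$ sums to $10$, and the three rows are visibly independent). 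So one must confirm that these two $3$–dimensional subspaces of $\R^6$ intersect only in $\nullvect$; equivalently, that the $6\times 6$ matrix whose columns are the three expanding columns of $P$ together with a basis of $\Ker{M_{g_3}}$ is invertible. This is a single determinant computation over $\mathbb{Q}(\sqrt3)$.

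Once the transversality condition holds, Theorem~\ref{th:largeab} tells us it is \emph{decidable} whether $g_3(h_6^\infty(a))$ avoids abelian $2$nd powers of period larger than $p$ for a chosen $p$. I would run the algorithm with $k=2$ and $p=5$: compute, via Proposition~\ref{prop:largeperiodset} and the Smith decomposition of $M_{g_3}$, a finite super-set $S$ of the templates that are realizable by $h_6$ and ancestors (by $g_3$ then by $h_6$) of the abelian-square template $[\varepsilon,\varepsilon,\nullvect]$; then apply Proposition~\ref{factorvsdiscrset} to bound the size $s$ of a shortest witness; then generate all factors of $\operatorname{Fact^\infty}(h_6)$ of size at most $s$ and check that none of them, after applying $g_3$, realizes a template in $S$ with period exceeding $5$. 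By Proposition~\ref{prop:largeabelianfree}, if no ancestor of $[\varepsilon,\varepsilon,\nullvect]$ is realizable, then $g_3(\operatorname{Fact^\infty}(h_6))$ avoids abelian squares of period larger than $\max_a|g_3(a)|=10$; the intermediate periods $6\le \ell\le 10$ are then handled by the finitely many explicit checks mentioned after Proposition~\ref{prop:largeabelianfree}. Since $h_6$ is primitive, Proposition~\ref{prop:factinfprim} identifies $\operatorname{Fact}(h_6^\omega(a))$ with $\operatorname{Fact^\infty}(h_6)$, so the conclusion transfers to the single word $g_3(h_6^\omega(a))$.

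The main obstacle is computational rather than conceptual: the set $S$ of templates and the bound $s$ can be large (as the $h_6$ computation in Section~\ref{sec:results:main} already shows, with tens of thousands of ancestors and $s=44$ there), so the verification has to be carried out by machine, and the paper defers it to the accompanying program. The only genuinely mathematical point needing care is the transversality $E_e(M_{h_6})\cap\Ker{M_{g_3}}=\nullvectset$, because everything downstream — finiteness of $S$, termination of the parent-enumeration, and hence decidability — rests on it; I would check it by exhibiting the determinant explicitly rather than relying on a numerical estimate, since the expanding eigenvectors involve $\sqrt3$ and a near-singular configuration would invalidate the bound extracted from Proposition~\ref{WKT}.
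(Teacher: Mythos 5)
Your proposal follows essentially the same route as the paper: verify the hypotheses of Theorem~\ref{th:largeab} for $h=h_6$, $g=g_3$ (primitivity, no eigenvalue of modulus $1$, and $E_e(M_{h_6})\cap\Ker{M_{g_3}}=\nullvectset$, using the bounds on the three null eigenvalues), compute via Proposition~\ref{prop:largeperiodset} the finite set of parents of $[\varepsilon,\varepsilon,\nullvect]$ by $g_3$ realizable by $h_6$ (the paper finds at most 16214), run the Section~\ref{sec:main} machinery and a machine check of short factors, and handle periods $6$ to $10$ by the explicit checks described after Proposition~\ref{prop:largeabelianfree}. This matches the paper's argument, which likewise defers the finite verification to the accompanying computer program.
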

The kernel of $q_3$ is of dimension 3, but using the bounds on the 3 null eigenvalues of $h_6$ we can compute that $[\varepsilon, \ldots,\varepsilon, \nullvect, \ldots, \nullvect]$ has at most 16214 parents by $g_3$  realizable by $h_6$. 
This is checked using
Theorem \ref{th:largeab}.
This gives an answer to a weak version of Problem \ref{makquestsquares}.
\begin{theorem}\label{makcorr}
There is an infinite word over 3 letters avoiding abelian squares of period more than $5$. 
\end{theorem}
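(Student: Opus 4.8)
The plan is to derive Theorem~\ref{makcorr} as an immediate consequence of Theorem~\ref{makanswerr} together with the observation that $g_3$ takes values in a three-letter alphabet and $h_6^\omega(a)$ is abelian-square-free (Theorem~\ref{abelianthe}). First I would note that the image $g_3(h_6^\omega(a))$ is an infinite word, since $h_6$ is primitive (so $h_6^\omega(a)$ is infinite) and $g_3$ is non-erasing. By inspection of the definition of $g_3$, every letter of the alphabet $\{a,b,c,d,e,f\}$ of $h_6$ is mapped to a word over the three-letter alphabet $\{a,b,c\}$ (here the symbols $a,b,c$ are being reused as letters of the target alphabet), so $g_3(h_6^\omega(a))$ is an infinite word over $3$ letters.

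Next I would invoke Theorem~\ref{makanswerr}, which states that $g_3(h_6^\omega(a))$ contains no square of period more than $5$. An abelian square $uv$ with $|u|=|v|=p$ is in particular realized whenever an ordinary square of period $p$ occurs, but the converse fails, so I must be careful: Theorem~\ref{makanswerr} as quoted speaks of \emph{squares}, whereas Theorem~\ref{makcorr} speaks of \emph{abelian squares}. The intended reading is that Theorem~\ref{makanswerr} asserts the absence of \emph{abelian} squares of period more than $5$ --- this is exactly what the machinery of Section~\ref{sec:app} decides (it checks realizability of the template $[\varepsilon,\ldots,\varepsilon,\nullvect,\ldots,\nullvect]$ by $g_3$ composed with $h_6$, via Theorem~\ref{th:largeab} with $p=5$, $k=2$). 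So the substantive content is: applying Theorem~\ref{th:largeab} to $h=h_6$, $g=g_3$, $p=5$, $k=2$, one decides that $g_3(h_6^\omega(a))$ avoids abelian squares of period larger than $5$; the hypotheses are met because $h_6$ is primitive, its eigenvalues $0,3,\sqrt3,-\sqrt3$ have no value of absolute value $1$, and $E_e(M_{h_6})\cap\Ker{M_{g_3}}=\nullvectset$ (the kernel of $M_{g_3}$ being $3$-dimensional and contained in the span of the generalized eigenvectors for the eigenvalue $0$ of $M_{h_6}$, i.e.\ in $E_c(M_{h_6})$, which meets $E_e(M_{h_6})$ only in $\nullvect$).

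Assembling these facts: $g_3(h_6^\omega(a))$ is an infinite word over $3$ letters (step one) that avoids abelian squares of period more than $5$ (step two, via Theorem~\ref{makanswerr}/Theorem~\ref{th:largeab}). This is precisely the assertion of Theorem~\ref{makcorr}, so the proof is complete once these two ingredients are in place.

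The main obstacle is entirely computational and hidden inside Theorem~\ref{makanswerr}: one must generate the finite super-set of realizable ancestors of the forbidden template (the paper reports at most $16214$ parents of $[\varepsilon,\ldots,\varepsilon,\nullvect,\ldots,\nullvect]$ by $g_3$ realizable by $h_6$), verify via the algorithm of Section~\ref{sec:main} that none of them is realizable by $h_6$, and separately check by brute force that no abelian square of period $l$ with $1\le l\le 5$ occurs --- wait, rather, that no short square of period at most $\max_a|g_3(a)|=10$ of period more than $5$ occurs, which is finitely many cases. Conceptually nothing more is needed; the verification is delegated to the accompanying computer program, and the correctness of that delegation rests on Proposition~\ref{prop:largeabelianfree} and Theorem~\ref{th:largeab} established earlier.
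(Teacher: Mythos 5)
Your proposal is correct and follows the paper's own route: Theorem~\ref{makcorr} is obtained directly from Theorem~\ref{makanswerr} (whose ``squares'' are indeed abelian squares, certified by the machinery of Theorem~\ref{th:largeab} and the accompanying computation), together with the observation that $g_3$ maps into a three-letter alphabet so that $g_3(h_6^\omega(a))$ is an infinite ternary word. Your added checks (primitivity of $h_6$, eigenvalues $0,3,\pm\sqrt{3}$, $E_e(M_{h_6})\cap\Ker{M_{g_3}}=\nullvectset$, and the finite verification of the intermediate periods $6$ to $10$) match what the paper delegates to its program, so nothing further is needed.
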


In order to check Theorem \ref{makanswerr}, we provide with this article a computer program that applies the algorithm described in the previous sections to $g_3(h_6^\omega(a))$.
This program also shows Theorem \ref{abelianthe} as a corollary of Theorem \ref{makanswerr}.
%In order to check Theorem \ref{makanswerr}, we provide with this article a code that applies the algorithm described in the previous sections to $g_3(h_6^\omega(a))$.
%Note that Theorem \ref{abelianthe} is a corollary of Theorem \ref{makanswerr}.

\subsection{Avoidability of long $2$-abelian squares}
Recently, Karhum\"aki \emph{et al.} introduced the notion of \emph{$k$-abelian equivalence} as a generalization of both abelian equivalence and equality of words~\cite{Karhumaki1}.
Two words $u$ and $v$ are said \emph{$k$-abelian equivalent} (for $k\ge 1$), denoted $u \approx_{a,k} v$, if for every $w\in \Sigma^*$ such that $|w| \leq k$, $|u|_w=|v|_w$. 
A word $u_1u_2\ldots u_n$ is a \emph{$k$-abelian $n$-th power} if it is non-empty, and $u_1 \approx_{a,k} u_2 \approx_{a,k} \ldots  \approx_{a,k} u_n$. 
Its \emph{period} is $\vert u_1\vert$.
A word is said to be \emph{$k$-abelian-$n$-th-power-free} if none of its factors is a $k$-abelian $n$-th power.
Note that when $k=1$, the $k$-abelian equivalence is exactly the abelian equivalence.

The existence of the  word from Theorem \ref{makanswerr} allows us to answer to the following questions:
\begin{Problem}[\cite{abeliancube,bigkabelian}]\label{question2abeliansquare}
Can we avoid $2$-abelian squares of period at least $p$ on the binary alphabet,
for some $p\in \mathbb{N}$ ? 
\end{Problem}

Let $h_2$ be the following morphism:
$$h_2:  
\left\{
  \begin{array}{ll}
      a \rightarrow & 11100000000\\ 
      b \rightarrow & 11010001010\\ 
      c \rightarrow & 11111101010.\\ 
    \end{array}
  \right.
$$

\begin{theorem}
 $h_2(g_3(h_6^\omega(a)))$ does not contain any $2$-abelian square of period more than 60.
\end{theorem}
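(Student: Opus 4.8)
The plan is to reduce the statement to a finite computation using the machinery developed in Sections \ref{sec:main} and \ref{sec:app}, exactly as was done for Theorem \ref{makanswerr}. The key observation is that a $2$-abelian square is a factor $uv$ with $|u|=|v|$ and $|u|_w=|v|_w$ for every word $w$ with $|w|\le 2$; over the binary alphabet $\{0,1\}$ the $2$-abelian equivalence of two words of equal length is captured by a linear map: record for each word $x$ the vector counting occurrences of the factors $00,01,10,11$ (and implicitly the length and the first and last letters, which are determined up to bounded ambiguity). So I would introduce a morphism $\Phi_2$ (analogous to $\Phi$ in Section \ref{sec:results:addsq}) sending the binary alphabet into $\mathbb{Z}^d$ for an appropriate small $d$, such that being a long $2$-abelian square is essentially realizing a template with zero $\vect{d}_i$ vectors modulo $\Phi_2$, up to the boundary corrections at the junction of $u$ and $v$ which only affect finitely many short periods.

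First I would set $g = h_2 \circ g_3$ and work with the morphic word $g(h_6^\omega(a))$, so that $h=h_6$ is the primitive driving morphism, which has eigenvalues $0,0,0,3,\sqrt 3,-\sqrt 3$ and hence no eigenvalue of absolute value $1$, so Proposition \ref{bound<1} applies. Next I would check the crucial non-degeneracy hypothesis $E_e(M_{h_6}) \cap \Ker{M_g} = \nullvectset$ (equivalently $E_e(M_{h_6})\cap\Ker{M_{h_2 g_3}}=\nullvectset$), which is exactly what makes the set of realizable parents finite; since $g_3$ already satisfies $E_e(M_{h_6})\cap\Ker{M_{g_3}}=\nullvectset$ by Theorem \ref{makanswerr} and composing on the left with $h_2$ cannot enlarge the kernel's intersection with the expanding eigenspace in a harmful way, this should go through (and in any case it is a finite linear-algebra check). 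Then, invoking Theorem \ref{th:largeab} with $g$ replaced by the $2$-abelian counting morphism post-composed appropriately — or more directly, a $2$-abelian analogue of Proposition \ref{prop:largeperiodset} and Proposition \ref{prop:largeabelianfree} where ``abelian $k$-th power'' is replaced by ``$2$-abelian square'' and the Parikh vector is replaced by the extended Parikh vector counting length-$\le 2$ factors — one obtains a finite set $S$ of templates containing every realizable ancestor of the forbidden ``long $2$-abelian square'' template. Running the decision procedure of Theorem \ref{decidingtemplates} on each template in $S$, and separately checking by brute force that no factor of $h_2(g_3(h_6^\omega(a)))$ of the bounded length $s$ from Proposition \ref{factorvsdiscrset} is a $2$-abelian square of period between $61$ and $\max_a |h_2 g_3(a)|$, completes the proof.

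The main obstacle I expect is the bookkeeping needed to express $2$-abelian equivalence of equal-length words as a $\Phi$-equivalence for a fixed linear $\Phi$ so that the template framework applies verbatim. The difficulty is that the count of length-$2$ factors of a concatenation $uv$ differs from the sum of the counts of $u$ and of $v$ by a single ``seam'' term depending only on the last letter of $u$ and the first letter of $v$; one must verify that this discrepancy is absorbed into the $\Psi(s_ip_{i+1})$-type correction terms already present in the parent relation, or else handle it by enlarging the alphabet (working with pairs of consecutive letters, i.e. passing to the de Bruijn-type recoding $b\mapsto$ its length-$2$ block recoding) so that ordinary Parikh vectors over the recoded alphabet already encode $2$-abelian content. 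After that recoding, the period-at-least-$61$ condition guarantees the seam contributes a bounded perturbation, so that avoiding long $2$-abelian squares becomes avoiding a finite family of templates, and the rest is the now-routine (but computer-assisted) application of Theorem \ref{decidingtemplates}; a program carrying out this computation would be provided alongside the article, as for the other theorems.
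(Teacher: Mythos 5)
Your central reduction fails at the step you treat as routine. If you set $g=h_2\circ g_3$ and try to invoke Theorem \ref{th:largeab} (equivalently Proposition \ref{prop:largeperiodset}) with ordinary Parikh vectors, the hypothesis $E_e(M_{h_6})\cap\Ker{M_{g}}=\nullvectset$ cannot hold: $M_{h_2}M_{g_3}$ is a $2\times 6$ matrix, so its kernel has dimension at least $4$, while $E_e(M_{h_6})$ has dimension $3$ (eigenvalues $3,\sqrt3,-\sqrt3$), and in a $6$-dimensional space these subspaces must intersect in dimension at least $1$. So your claim that composing on the left with $h_2$ ``cannot enlarge the kernel's intersection in a harmful way'' is simply false, and this is not a reparable technicality at the level of plain abelian counting: a $2$-abelian square is in particular an abelian square, and by Entringer \emph{et al.}~\cite{Entringer1974159} every infinite binary word contains arbitrarily long abelian squares, so no argument tracking only binary Parikh vectors can possibly prove the statement. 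Your fallback --- recoding so that Parikh vectors record factors of length at most $2$ (a de Bruijn-type block recoding, or an extended counting morphism $\Phi_2$) --- is the right kind of idea in principle, but it is precisely where all the work would lie and you verify none of its hypotheses: one would need the induced $2$-block morphism to be primitive with no eigenvalue of absolute value $1$, and the expanding eigenspace of the recoded matrix to meet the kernel of the $2$-abelian counting map only in $\nullvectset$. Nothing proved in the paper gives this, and block recodings routinely introduce extra eigenvalues ($0$ and roots of unity) that can violate the modulus-$1$ hypothesis of Theorem \ref{decidingtemplates}. As written, the proposal therefore has a genuine gap.

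The paper's proof takes a much lighter route, and it uses Theorem \ref{makanswerr} in a way your proposal never does. Reasoning only on the $11$-uniform morphism $h_2$ (the technique of~\cite{bigkabelian}), one shows that every $2$-abelian square of $h_2(g_3(h_6^\omega(a)))$ is either short or comes, up to bounded boundary effects, from an abelian square occurring in $g_3(h_6^\omega(a))$; by Theorem \ref{makanswerr} any such abelian square has period at most $5$, so the period of a $2$-abelian square in the image is at most $11\cdot 5+10=65$, and an exhaustive check of the short factors then sharpens the bound to $60$. In other words, instead of rebuilding the template machinery in a $2$-abelian setting, lift long $2$-abelian squares through $h_2$ alone and plug in the already-established avoidance of long abelian squares upstairs; that is both what makes the argument finite and where the specific constants $65$ and $60$ come from.
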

Using the same technique as in~\cite{bigkabelian} we can show, by reasoning only on $h_2$, that any $2$-abelian square of $h_2(g_3(h_6^\omega(a)))$ is small (with respect to 60)
or has a parent realized by $g_3(h_6^\omega(a))$ which is an abelian square. 
Thus the largest $2$-abelian squares of $h_2(g_3(h_6^\omega(a)))$ have a period of at most $11\times 5+10= 65$. 
The value 60 is then obtained by checking all the factors of $h_2(g_3(h_6^\omega(a)))$ of size at most $65$.
z
The value $60$ is probably not optimal (the lower bound from \cite{bigkabelian} is $2$).
The easiest way to improve this result would be to improve the upper bound on the period for M\"akel\"a's question.

\printbibliography

\end{document}